\newtheorem{theorem}{Theorem}
\newtheorem{lemma}[theorem]{Lemma}
\newtheorem{prop}[theorem]{Proposition}
\newtheorem{claim}[theorem]{Claim}
\theoremstyle{definition}
\newtheorem{definition}[theorem]{Definition}
\newcommand{\C}{\mathcal{C}}
\newcommand{\V}{\mathcal{V}}
\newcommand{\HF}{\textnormal{HF}}
\newcommand{\res}{\upharpoonright}
\newcommand{\inv}{^{-1}}
\title{Computable vs Descriptive Combinatorics of Local Problems on Trees}
\author{Felix Weilacher}
\email{}
\begin{document}

\maketitle

\begin{abstract}
    We study the position of the computable setting in the ``common theory of locality'' developed in \cite{G+_trees} and \cite{BCG_Trees2} for local problems on $\Delta$-regular trees, $\Delta \in \omega$. We show that such a problem admits a computable solution on every highly computable $\Delta-$regular forest if and only if it admits a Baire measurable solution on every Borel $\Delta$ -regular forest. We also show that if such a problem admits a computable solution on every computable maximum degree $\Delta$ forest then it admits a continuous solution on every maximum degree $\Delta$ Borel graph with appropriate topological hypotheses, though the converse does not hold.
\end{abstract}

\markboth{FELIX WEILACHER}{COMP. VS DESCR. COMB. OF LOCAL PROBLEMS ON TREES}

\section{Introduction}

In this paper, we consider locally checkable labeling problems (LCLs) on regular trees. We will use the following formulation of such LCLs from \cite{G+_trees}. Colorings in this formulation are on so-called ``half edges'', and we will need a nonstandard formalization of the notion of a graph to fully accommodate these. Fix $\Delta \geq 2$.

\begin{definition}\label{def:graph}
    A \textit{$\Delta$-regular graph} $G$ with vertex set $X$ and edge set $E$ is a subset $G \subset X \times E$ such that for each $x \in X$, there are exactly $\Delta$ $e \in E$ with $(x,e) \in G$, for each $e \in E$, the number of $x \in X$ with $(x,e) \in G$ is either 1 or 2, and for each $x \neq y$ in $X$, there is at most 1 $e \in E$ with $(x,e),(y,e) \in G$.
    
    An element $(x,e)$ $G$ is called a \textit{half edge incident to} $x$, $x$ an \textit{endpoint} of $e$, and $e$ \textit{incident} to $x$. If a given edge $e$ has two endpoints, say $x,y \in X$, we call it a \textit{true} edge, and say $x$ and $y$ are \textit{adjacent} and that $y$ is a \textit{neighbor} of $x$. In this case we may also identify the half edge $(x,e)$ with the ordered pair $(x,y)$. We will also call the half edges $(x,e)$ and $(y,e)$ the half edges \textit{comprising} $e$. Otherwise we call $e$ a \textit{virtual} edge and $(x,e)$ a \textit{virtual half edge}.
    
    If $x$ is a vertex of $G$, the \textit{degree} of $x$, denoted $\deg(x)$, is the number of neighbors of $x$. Call $G$ \textit{truly} $\Delta$-regular if the degree of each vertex is $\Delta$. This is equivalent to $G$ having no virtual edges.
\end{definition}

We use the definition of adjacency for vertices above to port standard graph theoretic terminology into this model. E.g. path, path distance, cycle, connected component. We call a graph $G$ \textit{acyclic} or a \textit{forest} if it has no cycles. We call a set of vertices $G$-\text{invariant} if it is a union of $G$-connected components. We use $B(-,r)$ to denote the radius $r$ (w.r.t. path distance) neighborhood of a vertex or set of vertices.

Observe also that if our $G$ is truly $\Delta$-regular, the adjacency relation gives a $\Delta$-regular graph on $X$ in the traditional sense, call it $\overline{G}$. In this case the identification in paragraph 2 of the above definition actually gives a bijection between $G$ and $\overline{G}$ where half edges in $G$ go to ordered edges in $\overline{G}$. We can thus identify $G$ and $\overline{G}$ in this case.

We can now describe our LCLs:

\begin{definition}\label{def:lcl}
	An \textit{LCL on $\Delta$-regular graphs} $\Pi$ is a triple $(\Sigma,\mathcal{V},\mathcal{E})$, where $\Sigma$ is a finite set, and $\mathcal{V}$ and $\mathcal{E}$ are sets of size $\Delta$ and $2$, respectively, multisets of labels from $\Sigma$. 
	
	Let $G$ be a $\Delta$-regular graph and $c:G \rightarrow \Sigma$. (That is, $c$ is a labeling of the half edges of $G$.)
	    \begin{itemize}
	        \item If $e$ is a true edge of $G$, write $c(e)$ to denote the size 2 multiset $\{c(x,e),c(y,e)\}$, where $x$ and $y$ are the endpoints of $e$.
	        \item If $x$ is a vertex of $G$, write $c(x)$ to denote the size $\Delta$ multiset of colors given to the half edges meeting $x$.
	    \end{itemize}
    We say $c$ is a \textit{$\Pi$-coloring} of $G$ if for each true edge $e$ of $G$, $c(e) \in \mathcal{E}$, and for each vertex $x$ of $G$, $c(x) \in \mathcal{V}$.
\end{definition}

Much exciting recent work has found connections between the study of LCLs in various settings. For example, in the LOCAL model \cite{linial_LOCAL}, one imagines vertices as processors and edges as communication channels, where each processor must decide on labels for its incident half edges based on a small amount of communication with its neighbors so that the result of all these decisions amounts to a $\Pi$-coloring for a given $\Pi$. In the descriptive setting, the vertex and edge sets are Polish spaces, $G$ is a descriptive-set-theoretically ``nice'' relation on them (e.g. Borel), and one looks for $\Pi$-colorings which are similarly ``nice''. The systematic study of connections between these two settings was introduced in \cite{Bernshteyn.distributed}. A starting point for this paper is the work in \cite{G+_trees} and \cite{BCG_Trees2}, which carried out a much more complete study of connections between these settings and others in the special case where $G$ is acyclic. 

Less explored are connections between these fields and the computable setting, where one is given a graph which is in some sense computable, and looks for similarly computable $\Pi$-colorings. Work of Qian and Weilacher \cite{QW_ASI} established some parallels between certain parts of this setting and of the descriptive setting, but failed to find any direct relationship. The main results of this paper show that such direct relationships do exist if one again restricts to the special case of acyclic graphs.

We now formally describe some of the settings which were described informally above. In (3) below and throughout the paper, $\HF$ denotes the set of hereditarily finite sets, which we use as our domain for discussing computability. Note that the definitions below make sense for general classes of graphs, but in this paper we are chiefly concerned with complexity classes for acyclic graphs.

\begin{definition}
	Let $\Pi = (\Sigma,\mathcal{V},\mathcal{E})$ be an LCL on $\Delta$-regular graphs.
	\begin{enumerate}
		\item We call a truly $\Delta$-regular graph $G$ \textit{Borel} if its vertex set $X$ is a Polish space and the associated $\overline{G}$ (see the comment after Definition \ref{def:graph}) is Borel in $X \times X$. We call a coloring $c:G \rightarrow \Sigma$ \textit{Borel} if it is Borel as function on $\overline{G} \subset X \times X$. We say $\Pi$ is in the class BAIRE if every acyclic such $G$ admits a Borel $\Pi$-coloring off a $G$-invariant meager Borel subset of vertices. I.e, there is a $G$-invariant comeager Borel set $C \subset X$ such that the graph identified with $\overline{G} \cap (C \times C)$ admits a Borel $\Pi$-coloring.
		
		\item (\cite{bernshteyn_continuous},\cite{BCG_Trees2}) We call a Borel $\Delta$-regular graph $G$ as in (1) \textit{continuous} if $X$ is zero-dimensional and for any clopen $U \subset X$, the set of neighbors of elements of $U$ is clopen. We call a coloring $c: G \rightarrow \Sigma$ \textit{continuous} if it continuous as a function on $\overline{G} \subset X \times X$ with the subspace topology. We say $\Pi$ is in the class CONTINUOUS if every acyclic such $G$ admits a continuous $\Pi$-coloring.

		\item We call a $\Delta$-regular graph $G$ \textit{computable} if its vertex and edge sets $X$ and $E$ are computable subsets of $\HF$ and $G$ is a computable subset of $X \times E$. We call a coloring $c : G \rightarrow \Sigma$ computable if it is computable as a function (we may of course assume $\Sigma \subset \HF$.) We say $\Pi$ is in the class COMPUTABLE if every computable acyclic $\Delta$-regular graph admits a computable $\Pi$-coloring.

        \item We call a computable $\Delta$-regular graph $G$ as in (3) \textit{highly computable} if the degree function $\deg:X \rightarrow \omega$ is computable. Since there are $\Delta$ edges incident to each $x \in X$ and $\deg(x)$ is the number of true such edges, this is equivalent to the set of true edges being computable. In particular, truly $\Delta$-regular computable graphs are highly computable. We say $\Pi$ in in the class HCOMP if every highly computable acyclic $\Delta$-regular graph admits a computable $\Pi$-coloring.
    \end{enumerate}
\end{definition}

We can now state our results. In \cite{QW_ASI}, Qian and Weilacher ask whether, for a general nice class of graphs, LCLs solvable in the highly computable setting are always solvable Baire-measurably. Our first result confirms this for regular forests.

\begin{theorem}\label{th:main}
    $\textnormal{BAIRE} = \textnormal{HCOMP}$.
\end{theorem}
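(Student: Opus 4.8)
\emph{Proof idea.} The plan is to prove both inclusions by routing them through the ``complexity-class'' description of BAIRE for local problems on $\Delta$-regular trees developed in \cite{G+_trees} and \cite{BCG_Trees2}. Recall from those works that $\Pi \in \textnormal{BAIRE}$ is equivalent to an intrinsic condition on $\Pi$ of intermediate strength: roughly, solvability by a deterministic $O(\log n)$-round LOCAL algorithm on finite $\Delta$-regular trees and, in the infinite setting, the existence of a solution that is a fixed-radius local function of a hierarchical (``toast'') decomposition of the tree into finite connected pieces -- a condition \cite{BCG_Trees2} phrases in terms of the space $\e$ of ends of the tree and an action of the infinite dihedral group $\zz$. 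So I would first fix, once and for all, a finite family of bounded-radius rules $\mathcal R$ witnessing this equivalence, so that $\Pi \in \textnormal{BAIRE}$ if and only if applying $\mathcal R$ to any such decomposition of the $\Delta$-regular tree yields a $\Pi$-coloring; the same $\mathcal R$ applies to the truly $\Delta$-regular completion of any $\Delta$-regular forest, which handles virtual half edges uniformly.

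For $\textnormal{BAIRE} \subseteq \textnormal{HCOMP}$: given $\Pi \in \textnormal{BAIRE}$ and a highly computable acyclic $\Delta$-regular graph $G$, I would establish the key lemma that $G$ admits a \emph{computable} toast. Since the true-edge relation of $G$ is computable and every component is a locally finite tree, one should be able to build such a hierarchical decomposition by a greedy/priority construction that processes vertices in order of enumeration and repeatedly enlarges and merges finite connected regions; acyclicity is exactly what keeps these local enlargements globally consistent and each merged region finite. Applying $\mathcal R$ to this computable toast then gives a computable $\Pi$-coloring of $G$, so $\Pi \in \textnormal{HCOMP}$.

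For $\textnormal{HCOMP} \subseteq \textnormal{BAIRE}$ (the direction asked in \cite{QW_ASI}): given $\Pi \in \textnormal{HCOMP}$ and a Borel acyclic $\Delta$-regular $G$ on a Polish space, I would use that, after deleting a $G$-invariant meager Borel set, $G$ carries a Borel toast -- standard for bounded-degree Borel graphs in the Baire-category setting -- together with Borel ``local bookkeeping'' such as a Borel proper coloring of every finite power of $G$, available generically. One then simulates, in a Borel way and piece by piece along the toast, the $\textnormal{HCOMP}$ algorithm run on each finite piece viewed as a (finite, hence highly computable) forest; bounded locality of each step keeps the resulting $\Pi$-coloring Borel on the comeager set. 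Alternatively, one can extract the rules $\mathcal R$ directly from the $\textnormal{HCOMP}$ algorithm by running it on a sufficiently rich family of finite and infinite highly computable test forests, passing to a subsequential limit, and then invoking the characterization; this variant is closer in spirit to the transfer arguments of \cite{Bernshteyn.distributed} and \cite{QW_ASI}.

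The hard part will be the effective/Borel manufacture of the mediating scaffolding: constructing a computable toast on an arbitrary highly computable forest for the first inclusion, and, for the second, either reconciling an algorithm that exploits the concrete computable presentation with the merely Baire-generic structure available on a Borel forest, or carrying out the compactness extraction of uniform local rules over well-chosen test forests -- and, throughout, verifying that the scaffolding is genuinely compatible with $\mathcal R$, including at virtual half edges. A secondary subtlety is making sure the intermediate characterization of BAIRE that one invokes is exactly the one whose ``$\mathcal R$-from-toast'' side is both necessary and sufficient, rather than one of the nearby but inequivalent classes.
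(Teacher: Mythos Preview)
Your plan for $\textnormal{BAIRE}\subseteq\textnormal{HCOMP}$ is essentially the paper's: reduce to the combinatorial characterization of BAIRE (here, Bernshteyn's \emph{fullness}), prove that every highly computable $\Delta$-regular forest admits a computable $l$-toast (your ``key lemma''), and then feed that toast into the fixed local rule from \cite{G+_trees} to get a computable $\Pi$-coloring. That direction is fine.

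The gap is in $\textnormal{HCOMP}\subseteq\textnormal{BAIRE}$. Your primary approach speaks of ``simulating the $\textnormal{HCOMP}$ algorithm'' on pieces of a Borel toast, but membership in $\textnormal{HCOMP}$ gives you no such algorithm: it is a $\Pi^0_2$-style promise that \emph{for each} highly computable forest \emph{some} Turing machine outputs a $\Pi$-coloring, with no uniformity across forests and no control over how a coloring of a small piece relates to one of a larger piece. So there is nothing to run piece by piece, and even if you pick colorings of the finite pieces, you have no mechanism to make them cohere along the toast nesting---that coherence is precisely what fullness supplies, and it is what you are trying to prove. Your alternative ``compactness extraction over test forests'' is too vague to close this gap; a subsequential limit of colorings of finite trees does not by itself yield the kind of path-extension property that characterizes BAIRE.

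The paper handles this direction by the contrapositive and a genuine diagonalization: assuming $\Pi$ is \emph{not} full, it builds a single computable truly $\Delta$-regular tree $T$ together with a priority construction that, for each partial recursive candidate coloring $\phi_t$, forces $\phi_t$ to fail on $T$. The failure of fullness is used to choose, for each already-realized set $\mathcal{V}'\subset\mathcal{V}$ of vertex-configurations, ``bad'' endpoint data and arbitrarily long ``bad'' path lengths; whenever $\phi_t$ converges to a valid coloring on the current test paths, the construction reads off new configurations, enlarges $\mathcal{V}'$, and splices in fresh bad paths between components so that $\phi_t$ must eventually produce a configuration outside the updated $\mathcal{V}'$. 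Since $|\mathcal{V}'|$ strictly increases at each ``interesting'' stage, this can happen only $|\mathcal{V}|$ times, contradicting totality of $\phi_t$. This priority/diagonalization idea is the missing ingredient in your proposal.
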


Our second result concerns the class COMPUTABLE. This class does not strongly parallel any in the descriptive or LOCAL settings; the inability in computable but not highly computable graphs to effectively quantify over the neighbors of a vertex, a process which is trivial in these other settings, severely restricts the problems one can make headway on. Still, the class does contain nontrivial problems such as $(\Delta+1)$-proper vertex coloring \cite{schmerl_brooks}, making it meaningful to establish the following one-sided implication:

\begin{theorem}\label{th:maincomp}
    $\textnormal{COMPUTABLE} \subset \textnormal{CONTINUOUS}$, but the reverse inclusion does not hold.  Moreover, if an LCL on $\Delta$-regular graphs is in \textnormal{COMPUTABLE}, it can be solved continuously on any $\Delta$-regular continuous graph, and computably on any $\Delta$-regular computable graph.
\end{theorem}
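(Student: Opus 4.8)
The plan is to distill from the hypothesis $\Pi \in \textnormal{COMPUTABLE}$ a single \emph{local rule}: a computable partial function $\phi$ sending a finite rooted $\Delta$-regular graph $(B,x)$ to a labeling of the $\Delta$ half edges at $x$, so that along every $\Delta$-regular forest $G$ one has, for each vertex $x$, some $r$ with $\phi(B(x,r),x)$ defined (\emph{eventual definition}), and so that setting $c(x) := \phi(B(x,r_x),x)$ for the least such $r_x$ always yields a $\Pi$-coloring (\emph{consistency}). Granting such a $\phi$, the acyclic assertions are immediate: on a computable $\Delta$-regular forest the $r_x$ are found by unbounded search and $c$ is computable; on a continuous one the map $x \mapsto (B(x,r_x),x)$ is locally constant --- for $\Delta$-regular $G$ the isomorphism type of a radius-$r$ ball is a continuous function of its center, since the neighbour relation takes clopen sets to clopen sets and balls have bounded size --- so $c$ is continuous; hence $\Pi \in \textnormal{CONTINUOUS}$.

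To build $\phi$, I would introduce a \emph{universal} computable acyclic $\Delta$-regular graph $U$: the disjoint union, over all finite rooted $\Delta$-regular forests $(F,v)$ (of which there are computably many, once virtual half edges are allowed to complete degrees), of $\omega$ copies of each, arranged so that neither which copy a vertex lies in nor its position within that copy is recoverable from any bounded ball around it. The hypothesis supplies a computable $\Pi$-coloring $c_U$ of $U$, and one sets $\phi(B,x) := c_U(u)$ for any $u \in U$ whose ball of radius $\operatorname{rad} B$ is isomorphic, as a rooted graph, to $(B,x)$. The crux is that the scrambling forces $c_U$ to be genuinely local --- a computable coloring cannot assign different labels to two vertices with isomorphic large balls without first searching far enough to witness a difference, and a finite-injury construction of $U$, in the spirit of the non-highly-computable phenomena of \cite{QW_ASI}, guarantees that no such difference is ever visible, making $\phi$ well defined. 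I expect this to be the main obstacle; it is also the only place the hypothesis is used, since without it $U$ merely carries a nonempty $\Pi^0_1$ class of $\Pi$-colorings with, in general, no computable or local member.

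For the ``moreover'' one must also handle cyclic $G$, where $B(x,r)$ need not embed in the acyclic $U$. I would enlarge the universal object so that the algorithm is fed breadth-first exploration transcripts of balls in arbitrary finite $\Delta$-regular graphs, and exploit the fact that a bounded-time computable procedure cannot detect acyclicity of its input: if the extracted rule ever returned a $\Pi$-violating label on some cyclic ball, one could splice together an acyclic computable $\Delta$-regular graph producing the identical bounded exploration, and hence the identical bad commitment, contradicting $\Pi \in \textnormal{COMPUTABLE}$. (The naive route via the universal cover fails: a $\Pi$-coloring of the universal cover of $G$ need not be invariant under the deck action, and so does not descend to $G$.) With $\phi$ now eventually defined and consistent along all $\Delta$-regular graphs, the search and local-constancy arguments above give solvability computably on every $\Delta$-regular computable graph and continuously on every $\Delta$-regular continuous graph.

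Finally, the reverse inclusion fails: one must exhibit $\Pi \in \textnormal{CONTINUOUS} \setminus \textnormal{COMPUTABLE}$. Since a continuous $\Pi$-coloring is Borel, a standard change-of-topology argument gives $\textnormal{CONTINUOUS} \subseteq \textnormal{BAIRE}$, which by Theorem~\ref{th:main} equals $\textnormal{HCOMP}$; so it suffices to produce a $\Pi \in \textnormal{CONTINUOUS}$ that fails on some computable --- necessarily not highly computable --- acyclic $\Delta$-regular forest. The idea is to choose $\Pi$ whose solution forces a vertex to commit to labels on the basis of whether certain incident edges are virtual: this is clopen information on a continuous graph, so $\Pi$ admits a finite-but-unbounded-radius local algorithm there and lies in $\textnormal{CONTINUOUS}$, whereas a computable algorithm must guess, and a finite-injury diagonalization builds a computable forest on which every candidate computable coloring is defeated by adversarially revealing or withholding true edges --- precisely the obstruction isolated in \cite{QW_ASI}.
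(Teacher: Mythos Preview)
Your approach is quite different from the paper's, which proceeds via a combinatorial characterization: an LCL is in \textnormal{COMPUTABLE} if and only if it is \emph{greedy} (Definition~\ref{def:greedy}), meaning there is a set $\Sigma' \subseteq \Sigma$ such that any partial commitment from $\Sigma'$ on some of the half edges pointing toward a vertex $x$ can be extended to a $\Pi$-valid configuration at $x$ with $\Sigma'$-labels on the remaining outgoing half edges. The forward direction is by a contrapositive diagonalization (if $\Pi$ is not greedy, build a computable tree defeating every $\phi_t$); the reverse is a one-pass greedy algorithm (Proposition~\ref{prop:greedytocomp}) that processes vertices in order of their names and uses only the \emph{already-processed} neighbors, so it never needs to compute a ball and hence runs on non-highly-computable graphs, acyclic or not. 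Continuity (Proposition~\ref{prop:greedytocts}) follows by running the same algorithm along the color classes of a continuous proper vertex coloring. Strictness is obtained by showing that for homomorphism LCLs $\Pi_H$, greedy is equivalent to $H$ containing a $(\Delta+1)$-clique, and then exhibiting explicit clique-free $H_\Delta$ with $\Pi_{H_\Delta} \in \textnormal{CONTINUOUS}$.

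Your plan has two genuine gaps. First, the locality extraction is not justified: a computable $\Pi$-coloring $c_U$ of your universal $U$ is a total computable function of the \emph{name} $u \in \HF$, with full access to the (computable) graph $U$; it may perfectly well assign different labels to two vertices with isomorphic balls of every radius simply by inspecting their names, and still be a valid $\Pi$-coloring of $U$. No scrambling of the graph structure prevents this, and there is nothing to injure---you cannot force a non-local $c_U$ to violate $\Pi$ on $U$ without some combinatorial leverage (which is exactly what the greedy condition, or rather its failure, supplies in the paper's diagonalization). Second, even granting a ball-based rule $\phi$, the transfer to arbitrary computable $\Delta$-regular graphs fails: in a graph that is not highly computable one cannot decide which incident edges are true, so $B(x,r)$ is not computable and your unbounded search for $r_x$ cannot be carried out. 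This already bites for the class \textnormal{COMPUTABLE} itself (which includes non-highly-computable forests), not only for the cyclic ``moreover''. The paper's greedy algorithm sidesteps this entirely: it needs only the set of neighbors \emph{with smaller name}, which is computable from $G$.
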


It should be noted that, by the equivalences shown in \cite{G+_trees} and \cite{BCG_Trees2} between our descriptive set theoretic classes of interest and various classes from other settings (such as the LOCAL model), Theorems \ref{th:main} and \ref{th:maincomp} also provide connections between these other settings and computable combinatorics. For example, an LCL is in HCOMP if and only if it can be solved by a deterministic LOCAL algorithm on trees in time $O(\log(n))$, and if an LCL is in COMPUTABLE, it can be solved by a deterministic LOCAL algorithm on trees in time $O(\log^*(n))$.

\section{Highly Computable Forests}

In this section we prove Theorem \ref{th:main}. Key to the equality is the following combinatorial condition:

\begin{definition}\label{def:full}
    \begin{itemize}
        \item Let $l \in \omega$. A \textit{$\Delta$-regular path of length $l$} is a $\Delta$-regular tree with $l+1$ vertices, say $x_0,\ldots,x_l$, with $x_i$ and $x_{i+1}$ adjacent for each $i < l$, and all other edges being virtual. $x_0$ and $x_l$ are called the \textit{endpoints} of the path.
        \item Let $\Pi = (\Sigma,\mathcal{V},\mathcal{E})$ be an LCL on $\Delta$-regular graphs.  Let $l \geq 1$. Let $\mathcal{V}' \subset \mathcal{V}$. We say $\mathcal{V}'$ is \textit{$l$-full} if whenever $P$ is a $\Delta$-regular path of length at least $l$ and the half edges incident to its endpoints, say $x$ and $y$, are precolored so that $c(x),c(y) \in \mathcal{V}'$, this can be extended to a $\Pi$-coloring of $P$ so that $c(z) \in \mathcal{V}'$ for every vertex $z$ of $P$. We say $\Pi$ is \textit{$l$-full} if it admits some nonempty such $\mathcal{V}'$. We say $\Pi$ is \textit{full} if it is $l$-full for some $l$.
    \end{itemize}
\end{definition}

Recall the second bullet point in Definition \ref{def:lcl} for the meaning of $c(x)$ when $x$ is a vertex. The fullness condition is due to Anton Bernshteyn, who also has shown \cite{G+_trees} that an LCL on $\Delta$-regular graphs is in BAIRE if and only if it is full. It thus suffices for us to prove:

\begin{theorem}\label{thm:hcifffull}
    An LCL on $\Delta$-regular graphs is in HCOMP if and only if it is full.
\end{theorem}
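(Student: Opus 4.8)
I would prove the two inclusions of Theorem~\ref{thm:hcifffull} separately and directly, matching Bernshteyn's condition rather than invoking any prior link between HCOMP and BAIRE.

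\textbf{Full $\Rightarrow$ HCOMP.} Fix a nonempty $l$-full set $\mathcal V'\subseteq\mathcal V$ and a highly computable acyclic $\Delta$-regular graph $G$ with vertex set $X=\{x_0,x_1,\dots\}$. High computability makes the degree and neighbour functions, and hence the maps sending $x,r$ to the ball $B(x,r)$ and the induced subgraph $G\res B(x,r)$, uniformly computable, each ball being finite by bounded degree. The plan is to build a \emph{computable toast}: an increasing sequence $\emptyset=D_0\subseteq D_1\subseteq\cdots$ of computable finite unions of finite subtrees with $\bigcup_nD_n=X$ such that each connected component (``piece'') of $D_{n+1}$ either misses $D_n$ or contains every piece of $D_n$ it meets, together with a path-ball of radius $\ge 2l+1$ around it. One may take $D_{n+1}=B(D_n\cup\{x_n\},R_n)$ with $R_n\ge 2l+1$; this is computable, exhausts $X$ since $x_n\in D_{n+1}$, and the nesting holds because $D_n\subseteq D_{n+1}$ forces each piece of $D_n$ into a single piece of $D_{n+1}$, with the margin coming from $B(D_n,R_n)\subseteq D_{n+1}$. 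One then $\Pi$-colours $G$ from the inside out, maintaining that after stage $n$ all half-edges based in $D_n$ are coloured, every vertex of $D_n$ receives a label in $\mathcal V'$, and the colours on half-edges leaving $D_n$ are ones that can still be legally completed. To pass from $D_n$ to $D_{n+1}$ one treats each piece $Q$: as a subtree it is the (already coloured) union of the pieces of $D_n$ it contains, with finitely many subtrees hanging off through single edges whose inner half-edges are coloured; the margin guarantees that the part of each hanging subtree which can see $\partial Q$ lies beyond a path of length $\ge l$, so one picks $\mathcal V'$-labels at the branch vertices of the hanging subtrees and applies $l$-fullness repeatedly along the resulting paths to finish, keeping every new label in $\mathcal V'$; pieces of $D_{n+1}$ missing $D_n$ are coloured the same way from an arbitrarily labelled root. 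The union of these partial colourings is a computable $\Pi$-colouring.

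\textbf{HCOMP $\Rightarrow$ full.} I prove the contrapositive by diagonalising against the partial computable functions $\varphi_0,\varphi_1,\dots$ viewed as candidate colourings. If some finite $\Delta$-regular tree has no $\Pi$-colouring, it alone witnesses $\Pi\notin\textnormal{HCOMP}$, so assume every finite $\Delta$-tree is $\Pi$-colourable; then a highly computable forest all of whose components are finite always has a computable $\Pi$-colouring (breadth-first search finds the finite component and halts, then one brute-forces a colouring of it), so the forest we build must have infinite components. The combinatorial input from non-fullness, got by unwinding Definition~\ref{def:full} with $\mathcal V'=\mathcal V$ and applying the structural analysis of non-full problems in \cite{G+_trees}, is that for each fixed finite ``boundary datum'' $\beta$ (a partial colouring of the half-edges at one vertex) there are $\Delta$-regular path gadgets of unbounded length admitting no $\Pi$-colouring compatible with $\beta$, the gadget being choosable after $\beta$ is known. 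Then $G$ is built in stages as an acyclic $\Delta$-regular forest with one ``arena'' reserved per $\varphi_e$; in the arena for $\varphi_e$ one grows a long $\Delta$-regular path from a reserved vertex, waits for $\varphi_e$ to halt and commit colours on an initial stretch of it (if $\varphi_e$ never halts on some half-edge of the final $G$ it is not total and is defeated), reads off the committed datum at the frontier, and appends a gadget as above, so that no $\Pi$-colouring of $G$ extends $\varphi_e$'s commitment and $\varphi_e$ is not a $\Pi$-colouring. To keep $G$ highly computable one never lets a reserved vertex become a true endpoint: its degree is declared $\Delta$ at birth and its missing true edges are supplied later to fresh reserved vertices, so the degree function is computable and $G$ remains an acyclic $\Delta$-regular forest, necessarily with infinite components.

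\textbf{Expected main obstacle.} I expect the crux to be the inside-out colouring in the first direction: $l$-fullness is a statement purely about paths, whereas the annular regions of the toast are genuinely branching finite subtrees, and one must colour them while honouring the $\mathcal E$-constraint on every half-edge leaving the already-coloured part---in particular ensuring that the colours placed on those outgoing half-edges are ones that some $\mathcal V'$-label can legally continue. Making this precise seems to require tracking which colours of a given $\mathcal V'$-label may sit on a true versus a virtual half-edge, and perhaps passing to the sub-family of $\mathcal V'$ consisting of labels all of whose ``exposed'' colours are of the good kind and verifying that this smaller family is still $l$-full, or iterating $l$-fullness rather than using it once. In the second direction the corresponding difficulty is pinning down the precise non-extendability gadget furnished by non-fullness and checking that $\varphi_e$'s partial output is incompatible with \emph{every} completion of its arena, not merely with some completion.
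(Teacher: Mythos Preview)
Your forward direction is essentially the paper's: build a computable toast by high computability and feed it to the toast-to-colouring procedure of \cite{G+_trees}. The paper's toast is built from balls $B(n,r)$ around single vertices rather than from $B(D_n\cup\{x_n\},R_n)$, but the idea is the same, and indeed the paper does not re-prove the colouring step at all---it simply cites \cite[Proposition~6.4]{G+_trees}. So your worry about extending $l$-fullness from paths to branching annuli is legitimate, but it is a worry about that cited proposition, not about anything new here.

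The reverse direction, however, has a real gap, and it lies exactly where you suspected but is worse than you suspected. You extract your gadget by ``unwinding Definition~\ref{def:full} with $\mathcal V'=\mathcal V$'', claiming that for each committed boundary datum $\beta$ there is a path gadget admitting no $\Pi$-colouring compatible with $\beta$. Non-fullness does not say this. Non-fullness of $\mathcal V'=\mathcal V$ only says that for each $l$ there exist \emph{particular} endpoint labels $a,b\in\mathcal V$ and half-edge colours $\alpha,\beta$ such that a length-$\ge l$ path with those specific endpoints cannot be completed. There is no reason $\varphi_e$'s committed output must ever produce one of these specific configurations; a single arena waiting for a single commitment can simply be avoided.

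The paper's construction is accordingly much more elaborate. It uses non-fullness for \emph{every} nonempty $\mathcal V'\subseteq\mathcal V$, not just $\mathcal V'=\mathcal V$: for each such $\mathcal V'$ one fixes ``bad'' $a,b\in\mathcal V'$, $\alpha,\beta$, and infinitely many bad lengths for which any $\Pi$-colouring of the path must use a vertex-label outside $\mathcal V'$. The diagonalisation against $\varphi_t$ then maintains $N\gg 1$ parallel paths and a growing set $\mathcal V'$ of labels already observed in $\varphi_t$'s output; each time $\varphi_t$ converges to a $\Pi$-colouring on all current paths, pigeonhole finds a new label $d\notin\mathcal V'$ used on $\ge N/|\mathcal V|$ of them, one updates $\mathcal V'\leftarrow\mathcal V'\cup\{d\}$, picks the bad data for the new $\mathcal V'$, and (because one has been hoarding witnesses $y_i^c$ with $\varphi_t(y_i^c)=c$ for each $c\in\mathcal V'$) can join components along fresh paths of bad length with the prescribed bad endpoint data. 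Since $|\mathcal V'|$ strictly increases, this can happen at most $|\mathcal V|$ times, so taking $N_0>(4|\mathcal V|)^{|\mathcal V|}$ parallel components at the start suffices; a total $\varphi_t$ would force infinitely many such steps, a contradiction. Your single-arena, single-commitment picture misses this entire mechanism.
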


The proof in \cite{G+_trees} of the reverse direction of Bernshteyn's result uses a construction called a \textit{toast}:

\begin{definition}\label{def:toast}
    Let $G$ be a $\Delta$-regular graph and $l \in \omega$. An \textit{$l$-toast} for $G$ is a collection $\C$ of nonempty finite $G$-connected sets of vertices (called \textit{pieces}) such that
    \begin{enumerate}
        \item For distinct $C,D \in \C$, either $B(C,l) \subset D$, $B(D,l) \subset C$, or the path distance from $C$ to $D$ is greater than $l$.
        \item If vertices $x,y$ are in the same $G$-component, there is some $C \in \C$ containing both of them.
    \end{enumerate}
\end{definition}

If $G$ is a computable graph, it makes sense to ask that a toast for $G$ be computable. In \cite{G+_trees}, Proposition 6.4, a straightforward algorithm is presented which uses a $(2l+2)$-toast for a $\Delta$-regular forest, say $T$, to produce a $\Pi$-coloring of $T$ given that $\Pi$ is $l$-full. One can easily see that this will produce a computable $\Pi$-coloring if the input toast is computable, so it suffices for our reverse direction to prove the following, whose origins seem to be in \cite{bean} (Theorem 3).

\begin{theorem}\label{thm:comptoast}
    Let $G$ be a highly computable $\Delta$-regular graph. $G$ admits a computable $l$-toast for every $l \in \omega$.
\end{theorem}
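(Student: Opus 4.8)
The plan is to build the toast greedily, in stages indexed by the vertices, the key leverage being that high computability makes the neighbour relation of $G$ computable (given $x$, search for its $\deg(x)$ true incident edges and their other endpoints), and hence makes all balls $B(F,r)$ with $F$ finite and all \emph{finite} path distances computable — even though membership of two vertices in a common connected component is in general undecidable. Fix $l$ and enumerate the vertex set injectively as $x_0,x_1,\dots$. At stage $n$ I would produce one new piece $D_n$ as follows: start from the ball $S:=B(x_n,n)$, where crucially the radius grows with the stage; then, repeatedly, while some maximal piece $C$ of $\mathcal{C}_{n-1}:=\{D_0,\dots,D_{n-1}\}$ satisfies $d(C,S)\le 3l$ and $C\not\subseteq S$, enlarge $S$ by adjoining all such $C$ together with shortest paths joining them to $S$; finally set $D_n:=B(S,l)$, $\mathcal{C}_n:=\mathcal{C}_{n-1}\cup\{D_n\}$, and $\mathcal{C}:=\bigcup_n\mathcal{C}_n$. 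Since $\mathcal{C}_{n-1}$ has only finitely many maximal pieces the inner loop terminates, each $D_n$ is a nonempty finite $G$-connected set (the $l$-neighbourhood of a connected set), and every operation involved is effective and uniform in $n$.

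Next I would verify the two toast axioms, carrying along the invariant that the maximal pieces of $\mathcal{C}_n$ are pairwise at path distance $>2l$. For axiom (1) it is enough to compare the new $D_n$ with the \emph{maximal} pieces of $\mathcal{C}_{n-1}$, since any other old piece lies inside some maximal one and the required alternative is inherited downward (the distance to $D_n$ only increases on passing to a subset, and $B(C',l)\subseteq B(C,l)$ when $C'\subseteq C$). A maximal piece absorbed into $S$ ends with $B(C,l)\subseteq B(S,l)=D_n$; a maximal piece not absorbed has $d(C,S)>3l$, hence $d(C,D_n)>2l$, which also restores the invariant. For axiom (2), note that $B(x_n,n)\subseteq D_n$ for every $n$; given $x_i,x_j$ in one component, let $R$ be the vertex set of a shortest $x_i$–$x_j$ path, so $\operatorname{diam}_G(R)\le |R|-1$, and the last vertex $v=x_m$ of $R$ to be enumerated satisfies $m\ge |R|-1\ge \operatorname{diam}_G(R)$, whence $x_i,x_j\in B(v,m)\subseteq D_m$. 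Finally, $\mathcal{C}$ is a genuinely \emph{computable} set of (codes of) finite vertex sets, not merely computably enumerable: if $F=D_n$ then $x_n\in B(x_n,n)\subseteq F$, and since the enumeration is injective $\{n:x_n\in F\}$ is finite and computable, so one runs the construction up to $M:=\max\{n:x_n\in F\}$ and tests whether $F$ occurs among $D_0,\dots,D_M$.

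The step I expect to be the real obstacle is axiom (2). The naive construction — grow a \emph{fixed}-radius piece around each vertex — fails, because two vertices far apart in the same infinite component can fail to be engulfed by a common piece, and one cannot patch this by deliberately merging their pieces since same-componentness is undecidable. Letting the radius in $B(x_n,n)$ grow with the stage forces the merge automatically: a sufficiently late vertex on the connecting path spawns a ball large enough to swallow both endpoints. The price is that such growing balls could in principle violate axiom (1) by reaching a distant maximal piece, which is precisely what the closure/absorption step is designed to neutralize, and one must also ensure (as above) that the resulting toast is decidable and not merely c.e. This is essentially the argument behind \cite{bean}, Theorem 3, recast in the present half-edge formalism.
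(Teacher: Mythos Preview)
Your argument is correct, but it takes a genuinely different and more laborious route than the paper's. The paper also builds the toast greedily over $\omega$ stages, but at stage $n$ it first checks whether some existing piece already contains $B(n,1)$; if so it does nothing, and otherwise it adds $B(n,r)$ for the \emph{least} $r>0$ compatible with axiom~(1) against the finitely many pieces so far. There is no growing seed radius and no absorption loop. Axiom~(2) is then obtained by a short induction on path distance: since every $B(n,1)$ lies in some piece, if $x$ and $z$ already share a piece $C$ and $y$ is a neighbour of $z$, then the piece $D\supseteq B(y,1)$ meets $C$, so by axiom~(1) one of $C,D$ contains the other and the larger one contains both $x$ and $y$. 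You flagged axiom~(2) as the real obstacle and solved it by forcing $B(x_n,n)$ to grow with $n$, then repairing the resulting conflicts with axiom~(1) via the closure step; the paper's observation is that this obstacle dissolves once one notices that covering every $1$-ball already suffices, because axiom~(1) forces overlapping pieces to be nested and hence lets one chain along any path. Your version has the minor bookkeeping advantage that a piece is produced unconditionally at every stage, making the decidability check (``if $F=D_n$ then $x_n\in F$'') marginally cleaner; the paper's analogue is that any piece added at stage $n$ contains the vertex $n$, so one only needs to run the construction up to stage $\max(F)$.
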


\begin{proof}
Fix $l$. WLOG the vertex set of $G$, say $X$, is a subset of $\omega$. We will construct our toast recursively in $\omega$-stages. At each stage, we will have only finitely many pieces in our toast.

At stage $n$, if $n \not\in X$, we do nothing. If $n \in X$, we check our finitely many pieces to see if any contains $B(n,1)$. If so, we do nothing.

If not, we find the least $r > 0$ such that $B(n,r)$ can be added to our toast without violating (1). This exists since our toast so far is finite (so, in fact, there is a tail of $r$'s which will work) and it can be determined effectively since $G$ is highly computable, which means we can computably output the finite graph $G \res B(n,r)$ given $n$ and $r$. We then add $B(n,r)$ to our toast. 

Call the resulting set of finite connected sets of vertices $\C$. $\C$ is computable since at stage $n$, if we add a new set to $\C$, that set contains the vertex $n$. Thus, to test whether a given finite set $C$ is in $\C$, we only need to run our algorithm until stage $\max(C)$. (1) is satisfied since we made sure it was satisfied when adding each new piece. For each $n \in X$, at stage $n$ we made sure there was a piece of toast containing $B(n,1)$. This implies (2) by induction on the path distance between $x$ and $y$: If $z$ is a neighbor of $y$ and we have a piece of toast $C$ containing $x$ and $z$ and a piece of toast $D$ containing $B(y,1)$, then $C \cap D \neq \emptyset$, so one must contain the other, and then the larger one contains $x$ and $y$.
\end{proof}

We now prove by contrapositive the forward direction of Theorem \ref{thm:hcifffull}. The following proof is inspired by \cite{MR_matching}, which used a similar argument to construct, for a given $k \geq 2$, a computable truly $k$-regular computably bipartite graph with no computable perfect matching:

\begin{prop}\label{prop:hcconstruction}
        If an LCL on $\Delta$-regular graphs $\Pi = (\Sigma,\mathcal{V},\mathcal{E})$ is not full, there is a computable truly $\Delta$-regular forest (thus highly computable) with no computable $\Pi$-coloring.
\end{prop}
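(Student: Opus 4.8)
**The plan is to build a computable truly $\Delta$-regular tree by diagonalization against all potential computable $\Pi$-colorings**, using the failure of fullness to guarantee we can always block a candidate. Let me sketch the approach.

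Since $\Pi$ is not full, for every $l$ and every nonempty $\mathcal{V}' \subseteq \mathcal{V}$, the set $\mathcal{V}'$ fails to be $l$-full: there is a $\Delta$-regular path $P$ of length $\geq l$ whose endpoint-colorings lie in $\mathcal{V}'$ but which cannot be completed to a $\Pi$-coloring keeping all vertex-colors in $\mathcal{V}'$. The cleanest way to exploit this is to observe that the finitely many possible "boundary data" (colorings of the half-edges at an endpoint with vertex-color in $\mathcal{V}$) form a finite set, and one wants to show: for a suitably long path, *whatever* consistent colorings $\sigma, \tau \in \mathcal{V}$ we put at the two ends, if we iterate/amalgamate, non-fullness forces that some long path with prescribed (and controllable) end data has *no* valid $\Pi$-extension at all. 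I would first extract from non-fullness a clean combinatorial statement of this form — essentially, defining $\mathcal{V}'$ to be the set of endpoint-colors $\sigma$ such that arbitrarily long paths starting with $\sigma$ admit extensions staying "live," showing this $\mathcal{V}'$ is either empty or would witness fullness, and concluding there is a fixed length $L$ and a fixed bad pair of endpoint colorings beyond which completion is impossible.

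**The construction then proceeds in stages, maintaining a finite partial truly $\Delta$-regular tree with some "dangling" half-edges that will later be completed.** At stage $e$ I want to defeat the $e$-th partial computable function $\varphi_e$ as a candidate $\Pi$-coloring. I reserve a fresh long path (length $\geq L$, with slack) somewhere in the tree whose two ends will eventually be attached to the rest; I wait (running $\varphi_e$ with increasing resources, in a standard finite-injury-free or even injury-free priority fashion since requirements don't conflict) to see whether $\varphi_e$ commits to colorings at the half-edges near both ends of this path. If $\varphi_e$ never halts on these inputs, it is not total, hence not a coloring, and the requirement is vacuously met. If it does commit, then either the committed values already violate an edge or vertex constraint somewhere seen so far — done — or they give endpoint data in $\mathcal{V}$, and by the bad-pair property I can (after possibly routing through the appropriate bad endpoint colorings, which I can always realize by a short gadget since $\mathcal{V}$-colorings are realizable locally) arrange that no $\Pi$-coloring of the finished tree restricts to $\varphi_e$ on that region. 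Throughout, to keep the graph computable and truly $\Delta$-regular I complete all still-dangling half-edges from earlier stages by gluing on further tree structure — the key point being that a finite forest of bounded degree always embeds into a truly $\Delta$-regular tree, and this can be done computably and without disturbing the committed regions.

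**The main obstacle is the quantifier order in the diagonalization: $\varphi_e$ gets to see the whole (eventually infinite) tree before committing, so I cannot simply present it with an isolated path and react.** The resolution is that a $\Pi$-coloring is *local*: the constraint that kills $\varphi_e$ only involves the reserved path and its immediate attachment points, so it suffices to wait until $\varphi_e$ has output values on that finite region, and those values are then frozen regardless of what the rest of the tree looks like. This is exactly the leverage the "local" in LCL provides, and it is the same mechanism used in the matching construction of \cite{MR_matching} that the proposition cites. A secondary technical point is ensuring the stages fit together into a single computable graph — this requires a careful bookkeeping of which vertices/edges are finalized at each stage and a uniform way to extend a finite partial picture to a truly $\Delta$-regular tree, but it is routine once the diagonalization skeleton is in place.
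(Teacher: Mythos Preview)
Your diagonalization skeleton is right, but the combinatorial core is not. The step where you ``route through the appropriate bad endpoint colorings by a short gadget'' is doing all of the work, and in general no such gadget exists: there is no reason a finite $\Delta$-regular subtree attached at a half-edge should force every $\Pi$-coloring to put a prescribed multiset at the attachment point. Non-fullness applied with $\mathcal{V}' = \mathcal{V}$ does yield a specific quadruple $(a,b,\alpha,\beta)$ and infinitely many lengths for which the corresponding path has no $\Pi$-coloring at all --- but you have no control over what $\varphi_e$ outputs at the ends of your reserved path, and if it commits to some other valid boundary data you are stuck. (Your proposed one-shot extraction via a ``live'' $\mathcal{V}'$ is also too vague to evaluate as written; non-fullness of a given $\mathcal{V}'$ only says that \emph{some} pair of endpoint data fails for \emph{some} arbitrarily large length, not that all pairs fail, and applying it to a proper $\mathcal{V}' \subsetneq \mathcal{V}$ does not forbid $\Pi$-colorings, only $\Pi$-colorings that stay in $\mathcal{V}'$.)

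The paper's fix is an iterated pigeonhole argument rather than a one-shot gadget. One maintains a growing $\mathcal{V}' \subset \mathcal{V}$, initially empty, together with many disjoint paths $P_i$ satisfying the invariant: any $\Pi$-coloring of $P_i$ must use some vertex color outside the current $\mathcal{V}'$. (At the start the $P_i$ are single vertices and the invariant is vacuous.) When $\varphi_t$ commits to $\Pi$-colorings on all the $P_i$, each contributes a vertex whose color lies outside $\mathcal{V}'$; pigeonhole gives a popular such color $d$, and one updates $\mathcal{V}' \leftarrow \mathcal{V}' \cup \{d\}$. Now the bad quadruple $(a,b,\alpha,\beta)$ for the \emph{new} $\mathcal{V}'$ has $a,b \in \mathcal{V}'$, so both are actually realized by $\varphi_t$ at vertices already in hand; one joins pairs of such vertices (through the half-edges carrying $\alpha,\beta$) by fresh paths of a bad length for this $\mathcal{V}'$, which restores the invariant. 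Since $|\mathcal{V}'|$ strictly increases, after at most $|\mathcal{V}|$ rounds $\varphi_t$ must fail to $\Pi$-color some path, and starting each requirement with $N_0 > (4|\mathcal{V}|)^{|\mathcal{V}|}$ parallel vertices leaves enough copies to survive the pigeonhole losses. This accumulation of colors actually observed from $\varphi_t$ --- rather than a gadget forcing $\varphi_t$ to produce prescribed colors --- is the idea your sketch is missing.
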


\begin{proof}

If $\Pi$ is not full, then for every nonempty $\mathcal{V}' \subset \mathcal{V}$ and $l' \geq 1$, there are $l \geq l'$, multisets, say $a,b \in \mathcal{V}'$, and labels, say $\alpha, \beta \in \Sigma$ appearing in $a$ and $b$ respectively such that the following holds: Let $P$ be a $\Delta$-regular path of length $l$, say with vertices $x_0,\ldots,x_l$ in order. Label the half edges incident to $x_0$ such that $c(x_0) = a$ and $c(x_0,x_1) = \alpha$, and likewise for $x_l, b, x_{l-1}$ and $\beta$ respectively. Then if this labeling is extended to a $\Pi$-coloring $c$ of $P$, some vertex $x_i$ has $c(x_i) \not\in \mathcal{V}'$. Note that if $\mathcal{V}' = \mathcal{V}$, this means there is no such extension. 

Since $\Sigma$ is finite, we can say instead that for all nonempty $\mathcal{V}' \subset \mathcal{V}$, there exists $a,b \in \mathcal{V}'$ and $\alpha,\beta \in \Sigma$ which have this property for infinitely many $l$. Fix choices of such $a,b,\alpha$, and $\beta$ for each $\mathcal{V}'$, and call them \textit{bad} for $\mathcal{V}'$. Also call the infinite set of $l$ witnessing this badness the set of \textit{bad} $l$ for $\mathcal{V}'$. Note that determining whether a given $l$ is bad for a given $\mathcal{V}'$ is computable: It can be done by checking all possible colorings of a $\Delta$-regular path of length $l$ with labels from $\Sigma$, of which there are only finitely many.


Fix $X,E \subset \HF$ disjoint infinite computable sets, and a computable well order of $\HF$ of type $\omega$. $X$ and $E$ will be our vertex and edge sets respectively. Fix an effective enumeration $\{\phi_t \mid t \in \omega \}$ of the set of partial recursive functions $X \times E \rightarrow \Sigma$. (These are candidates for computable colorings of the half edges of our graph.) Fix also an effective enumeration $\{t_n \mid n \in \omega \}$ of $\omega$ which lists each natural number infinitely many times. We are about to describe a recursive construction of $\Delta$-regular forests $\emptyset = T_0 \subset T_1 \subset \cdots$ with vertex sets contained in $X$. The vertex set for each $T_n$ will be finite. It will happen that in our construction every $x \in X$ will eventually become a vertex and be made to have degree $\Delta$. The limit $T = \bigcup_n T_n$ will then be a computable truly $\Delta$-regular forest. Our construction will be designed so that $T$ does not admit a computable $\Pi$-coloring.

Alongside $T$, we will recursively build a $T$-invariant total computable function $C:X \rightarrow \omega$. The idea is that the vertices in $C\inv(t)$ will be responsible for ensuring that $\phi_t$ is not a $\Pi$-coloring of $T$. At stage $n$, the vertices on which $C$ has been defined will be exactly those in the vertex set of $T_n$. 

At a typical stage in the construction, we will take currently unused (not in the vertex set of our current finite tree) vertices, define $C$ on them, and give them $\Delta$ incident half edges. We will refer to these as ``new vertices'', and always choose the least (according to our previously fixed order of $\HF$) available. For example, if $x$ has degree 0 in $T_n$, and in the definition of $T_{n+1}$ we say ``add $\Delta$ new vertices as neighbors to $x$'', it means: let $y_0,\ldots,y_{\Delta-1}$ be the least elements of $X$ not in the vertex set of $T_n$, let $e_0,\ldots,e_{\Delta-1}$ be the virtual edges incident to $x$ in $T_n$, and add the half edges $(y_i,e_i)$ to $T_{n+1}$ for $i < \Delta-1$. The exact same treatment will be given to edges. For example, in the previous example, we would probably want to finish by saying ``add $\Delta-1$ new virtual edges incident to each $y_i$'' to maintain $\Delta$-regularity. This would mean: let $f_{i,j}$ for $i < \Delta$, $j < \Delta - 1$ be the least elements of $E$ not in the edge set of $T_n$, and add the half edges $(y_i,f_{i,j})$ to $T_{n+1}$ for each $i,j$. 

In both steps of the previous example, our language leaves some ambiguity about exactly which new vertices/edges are connected to which old edges/vertices. The exact decision here will never matter (except of course in that it should be made in a computable way).

If the stage of our construction is $n$, $C$ will always be defined to be $t_n$ on new vertices. Thus, our construction will essentially build $\omega$ forests in parallel, one for each $t$. Let us fix $t$, and describe the $\omega$ steps in the construction of the $t$-th forest, i.e, $T \res C\inv(t)$.

\begin{figure}
    \centering
    \includegraphics[width = 0.5\textwidth]{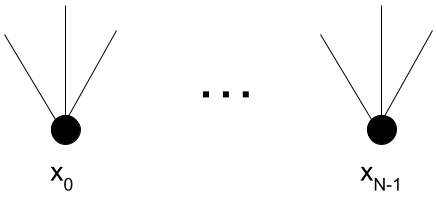}
    \caption{Stage 0 of the construction for a fixed $t$ for $\Delta = 3$.}
    \label{fig:0}
\end{figure}

Fix a very large $N_0 >> |\Sigma|$ to be determined later, independent of $t$. In the 0-th stage of the construction, place $N_0$ new vertices, say $x_0,\ldots,x_{N_0-1}$, in $C\inv(t)$. Also give each $x_i$ $\Delta$ new virtual incident edges. This is shown in Figure \ref{fig:0}.

We also initialize several variables: Set $\mathcal{V}' = \emptyset$ and $N = N_0$. Also, for each $i < N$, let $P_i$ be the $\Delta$-regular path of length 0 whose unique vertex is $x_i$.

At the start of an arbitrary successor stage, say $m+1$, suppose we have some $\mathcal{V}' \subset \Sigma$ and $N \in \omega$ very large. Suppose also that we have $\Delta$-regular paths $P_i$ for $i < N$ and vertices $y_i^c$ for $i < N$ and $c \in \mathcal{V}'$ such that
\begin{enumerate}
    \item The $P_i$'s and $y_i^c$'s all lie in distinct $C\inv(t)$-components.
    \item For each $i$, if $\phi_t$ restricted to the half edges meeting $P_i$ is a $\Pi$-coloring of $P_i$, there must be some vertex $x$ of $P_i$ with $\phi_t(x) \not\in \mathcal{V}'$.
    \item $\phi_t(y_i^c) = c$ for each $i$ and $c$.
\end{enumerate}
Note that this is all satisfied after stage 0. 

\begin{figure}
    \centering
    \includegraphics[width = 0.5\textwidth]{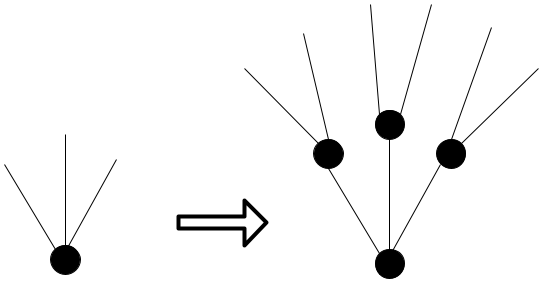}
    \caption{The ``uninteresting case'' for a step of the construction for a fixed $t$, for $\Delta = 3$.}
    \label{fig:default}
\end{figure}

We start our successor stage by running the Turing machine computing $\phi_t$ for $m$ steps on all of the half edges meeting all of the $P_i$'s. 

If $\phi_t$ fails to converge in $\leq m$ steps on one of these inputs, or if it converges on all of them but the result fails to be a $\Pi$-coloring of some $P_i$, we do what is shown in Figure \ref{fig:default}: For each virtual edge $e$ with unique endpoint in $C\inv(t)$, we make $e$ true by adding a new vertex, say $x$, as its other endpoint, then to maintain $\Delta$-regularity we add $\Delta-1$ new edges as virtual edges incident to this $x$. We call this the ``uninteresting case''.

The ``interesting case'' is of course if $\phi_t$ does converge on all these inputs in $\leq m$ steps, and the result is a $\Pi$-coloring for each $P_i$. In this case, by condition (2) above, there is a vertex in each $P_i$, say $z_i$, with $\phi_t(z_i) \not\in \mathcal{V}'$. Let $d \in \mathcal{V} \setminus \mathcal{V}'$ such that $\phi_t(z_i) = d$ for at least $N/|\mathcal{V}|$ $i$'s. Update $\mathcal{V}'$ to $\mathcal{V}' \cup \{d\}$.

Now $\mathcal{V}'$ is nonempty, so let $a,b \in \mathcal{V}'$ with $\alpha \in a$ and $\beta \in b$ be bad for $\mathcal{V}'$. Between the $z_i$'s and the $y_i^c$'s, and using condition (1) above, we can choose degree $\Delta$ vertices $v_i$ and $w_i$ for $i < M := N/(2|\mathcal{V}|)$ such that $\phi_t(v_i) = a$ for each $i$, $\phi_t(w_i) = b$ for each $i$, and all $2M$ of these vertices lie in distinct $C\inv(t)$-components. (The factor of $1/2$ is needed for the case $a = b$.)

\begin{figure}
    \centering
    \includegraphics[width = 0.7\textwidth]{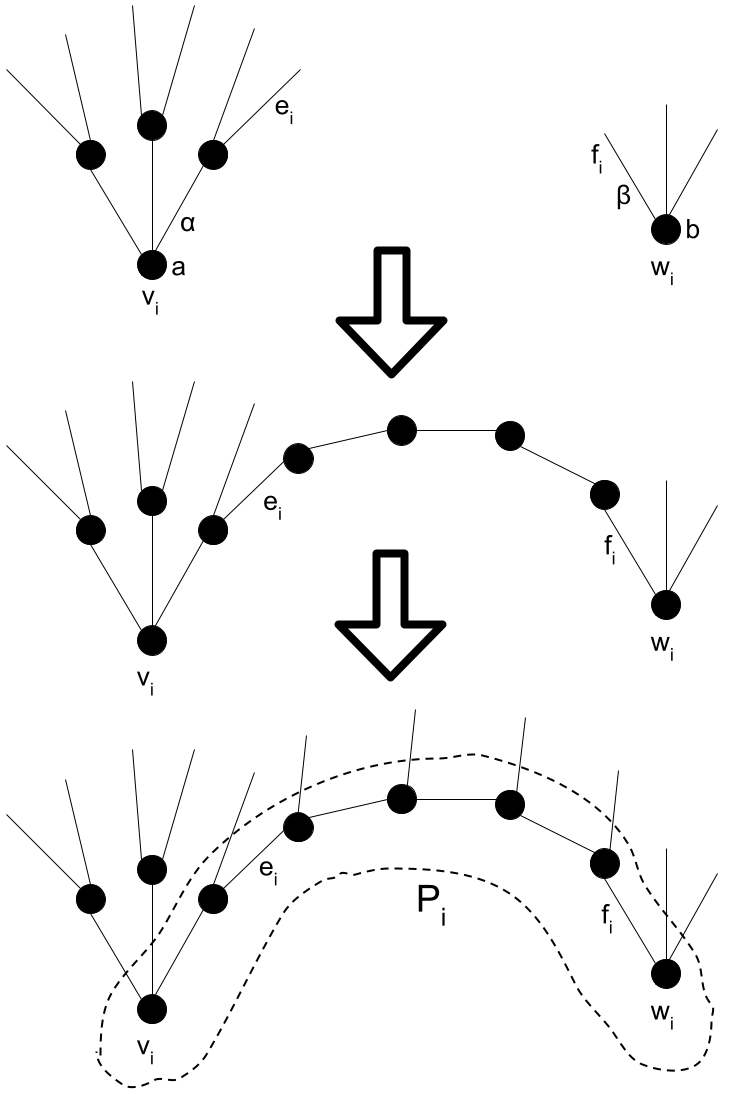}
    \caption{The ``interesting case'' for a step of the construction for a fixed $t$. The top and bottom of the image represent the before and after states respectively. The dashed line encloses one of the new paths, $P_i$.}
    \label{fig:join}
\end{figure}

Now for each $i < M / 2$, pick $e_i$ and $f_i$ virtual edges meeting the components of $v_i$ and $w_i$ respectively such that if $e$ is the first edge in the path from $v_i$ to $e_i$, then $\phi_t(v_i,e) = \alpha$, and likewise for $w_i$, $f_i$, and $\beta$. Note that these exist since our components are finite $\Delta$-regular trees. As shown in in the first step in Figure \ref{fig:join}, join $e_i$ and $f_i$ with a path of new vertices so that the resulting path between $v_i$ and $w_i$ has a total length which is bad for $\mathcal{V}'$. (Recall that this is possible as there are arbitrarily long bad lengths, and that determining if a length is bad is computable.) Call this new path $P_i$. Also, as shown in the second step in Figure \ref{fig:join}, add $\Delta-2$ new virtual incident edges to the newly added vertices in $P_i$ to maintain $\Delta$-regularity. Condition (2) is satisfied for each $P_i$ by definition of bad.

We can now finish this stage of the construction with the updated value of $N$ being $M/2$. We still need to define the new $y_i^c$'s. Use the unused $v_i$'s and $w_i$'s for $c \in \{a,b\}$. For $c \not\in \{a,b\}$, use the first $M/2$ $z_i$'s with $\phi_t(z_i) = d$ if $c = d$, and the first $M/2$ $y_i^{c}$'s from the previous stage of the construction if $c \neq d$ (i.e, if $c$ was in the previous $\mathcal{V}'$.) (1) is clear by construction.

Thus the description of the construction is complete. We can also see now that the correct value of $N_0$ to take was $N_0 > (4|\mathcal{V}|)^{|\mathcal{V}|}$, since the value of $N$ was divided by $4|\mathcal{V}|$ each time the interesting case occurred, and it cannot occur more than $|\mathcal{V}|$ times for a fixed $t$ (Since $|\mathcal{V}'|$ increases by one each time it does).

\begin{claim}\label{claim:computable}
$T$ is a computable truly $\Delta$-regular forest.
\end{claim}
\begin{proof}
At each stage in the construction, we either added new trees as components, added new degree 1 neighbors to vertices, or joined pairs of components along a single path. Thus the components at each stage were trees, and so the limit $T$ must be acyclic.

For each fixed $t$, the interesting case could only have occurred finitely many times as noted before the claim. Thus each vertex, once added to $C\inv(t)$, must have eventually experienced the uninteresting case, at which point it would be made degree $\Delta$ by construction. Since we added new vertices at the first stage for each $t$, and we always take these to be the least available, each $x \in X$ is eventually added to some $C\inv(t)$, and subsequently made to have degree $\Delta$. Thus $T$ is truly $\Delta$-regular and its vertex set is $X$, which was chosen to be computable. Similarly the edge set of of $T$ is $E$, also chosen to be computable.

Finally, $T$ is clearly computable: The construction of the $T_n$'s was clearly recursive, so to check whether a given $(x,e) \in X \times E$ is in $T$, we can run the construction until $x$ is added to the vertex set, at which point it gets $\Delta$ incident edges, and we can check if any of these is $e$.
\end{proof}

\begin{claim}
$T$ has no computable $\Pi$-coloring.
\end{claim}

\begin{proof}
Suppose $\phi_t$ is a $\Pi$-coloring of $T$, and consider our construction for this fixed value of $t$. In between each interesting case stage of our construction, we have finitely many $\Delta$-regular paths $P_i$, and are waiting for $\phi_t$ to converge to a $\Pi$-coloring of them. Since $\phi_t$ is in fact a $\Pi$-coloring of $T$, we are always guaranteed that this will eventually happen. That is, the interesting case will occur unboundedly many times for this $t$, but as noted before Claim \ref{claim:computable}, it can only occur $|\mathcal{V}| < \omega$ times.
\end{proof}

Thus $T$ is as desired.

\end{proof}

\section{Computable Forests}

In this section we prove Theorem \ref{th:maincomp}. As in the previous section, the key is to find a combinatorial condition characterizing our class.

\begin{definition}\label{def:greedy}
    Let $\Pi = (\Sigma,\mathcal{V},\mathcal{E})$ be an LCL on $\Delta$-regular graphs. Let $\Sigma' \subset \Sigma$. We say $\Sigma'$ is \textit{greedy} if the following holds: Let $k \in \{0,\ldots,\Delta\}$, and consider the $\Delta$-regular tree $H$ with vertices $x$ and $y_i$ for $i < k$ with $x$ and $y_i$ adjacent for each $i$ and all other edges being virtual. Precolor each half edge $(y_i,x)$ with a label from $\Sigma'$. Then this can be extended to a $\Pi$-coloring of $H$ such that $c(x,e) \in \Sigma'$ for each virtual edge $e$ incident to $x$. (See Figure \ref{fig:greedy}.) We say $\Pi$ is \textit{greedy} if it admits some greedy $\Sigma'$.
\end{definition}

\begin{figure}
    \centering
    \includegraphics[width = 0.9\textwidth]{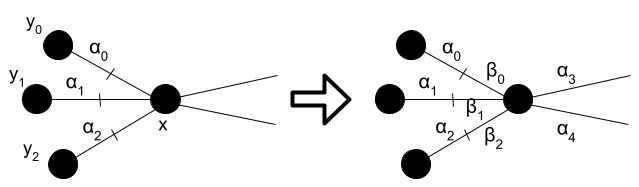}
    \caption{An illustration of Definition \ref{def:greedy} with $\Delta = 5$ and $k = 3$. True edges are divided with slashes to show the two half edges comprising them. The left hand figure is given, and the right hand figure is the needed extension. The $\alpha$'s are colors from $\Sigma'$, whereas the $\beta$'s can be any elements of $\Sigma$. Definition \ref{def:greedy} also technically asks for colors for the other half edges incident to the $y_i$'s, but these do not turn out to be relevant and so are not drawn.}
    \label{fig:greedy}
\end{figure}

In analogy with Theorem \ref{thm:hcifffull}, we prove:

\begin{theorem}\label{thm:ciffgreedy}
    An LCL on $\Delta$-regular graphs is in COMPUTABLE if and only if it is greedy.
\end{theorem}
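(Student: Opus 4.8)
The plan is to prove both directions of Theorem~\ref{thm:ciffgreedy} by mirroring the structure of Section~2, replacing the ``toast'' machinery with a greedy construction that exploits the freedom to not know which virtual edges of a computable graph will become true.

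\textbf{Greedy implies COMPUTABLE.} Suppose $\Sigma'$ is a greedy set for $\Pi$. Given a computable acyclic $\Delta$-regular graph $G$ with vertex set $X \subset \omega$, I would process the vertices in order $0,1,2,\ldots$. Maintain a partial labeling $c$ of half edges, together with a growing set of ``decided'' vertices. When processing $n \in X$: look at all half edges $(n,e)$ already labeled by a neighbor that has been decided — say there are $k$ of them, and since $G$ is acyclic and we process in order, the decided neighbors of $n$ form an independent-looking local picture with no constraints among themselves. Crucially, each such labeled half edge $(m,e)$ was labeled with a value in $\Sigma'$ (to be maintained as an invariant), so the half edge $(n,e)$ — which must pair with $c(m,e)$ in some $\mathcal{E}$-multiset — gets a label; we need these labels of the already-incident half edges to themselves lie in $\Sigma'$, which is exactly the ``precolor each $(y_i,x)$ with a label from $\Sigma'$'' hypothesis in Definition~\ref{def:greedy} read from $n$'s side. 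Wait — one must be careful here: greediness as stated precolors the half edges \emph{pointing into} $x$, i.e.\ $(y_i,x)$, and asks to extend so the virtual half edges $(x,e)$ at $x$ are in $\Sigma'$. So the invariant to maintain is that whenever a vertex is decided, all half edges of the form $(x,e)$ for $e$ not yet connecting to a decided vertex carry labels in $\Sigma'$; then when a later neighbor $n$ is processed, the half edges $(m,n)$ it sees are exactly of the form needed. Apply the definition of greedy to $n$ with this value of $k$ to label all remaining half edges at $n$, keeping the virtual (or not-yet-decided) ones in $\Sigma'$. Since $G$ is acyclic there is never a conflict, and the computation at each step is finite (we only need to know which neighbors of $n$ are already decided, which we find by running the construction), so $c$ is computable and is a $\Pi$-coloring. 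The same argument verbatim works for continuous graphs: there the neighbor set of a clopen set is clopen, and one builds the coloring in $\omega$ rounds assigning on larger and larger clopen pieces; acyclicity again prevents conflicts. This also handles the ``moreover'' clause for computable and continuous (not-necessarily-acyclic-but-forest) graphs.

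\textbf{COMPUTABLE implies greedy, by contrapositive.} If $\Pi$ is not greedy, then for every $\Sigma' \subset \Sigma$ there is a ``bad'' pattern: some $k$ and a precoloring of the $k$ in-half-edges by elements of $\Sigma'$ admitting no extension keeping the $x$-side virtual half edges in $\Sigma'$. I would build a computable truly $\Delta$-regular tree $T$ (or forest) defeating every partial recursive $\phi_t$, in the spirit of Proposition~\ref{prop:hcconstruction} but simpler, since greediness is a one-step rather than a path condition. For each $t$, dovetailing as before, start with many fresh vertices all of whose edges are virtual; run $\phi_t$ for more and more steps. The set $\Sigma'$ to target is, roughly, the set of labels $\phi_t$ actually uses on virtual half edges at vertices where it has produced a locally $\Pi$-valid picture; once $\phi_t$ commits to labels in some $\Sigma'$ on enough virtual half edges (by pigeonhole, many vertices agree on the multiset of already-forced data), attach to those vertices the bad configuration for $\Sigma'$ — i.e.\ hook up $k$ of those committed virtual half edges (whose labels lie in $\Sigma'$) as the $(y_i,x)$-edges of a new vertex $x$, which by badness forces $\phi_t$, if it is to be a $\Pi$-coloring, to eventually use a label outside $\Sigma'$ somewhere locally, enlarging the effective $\Sigma'$. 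As in Proposition~\ref{prop:hcconstruction}, $\Sigma'$ can only grow $|\Sigma|$ many times, so eventually $\phi_t$ must diverge or fail; meanwhile every vertex, after finitely many rounds, reaches an ``uninteresting'' round and is completed to degree $\Delta$. The limit $T$ is computable, truly $\Delta$-regular, acyclic, with no computable $\Pi$-coloring.

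\textbf{Expected main obstacle.} The subtle point is the bookkeeping in the reverse direction: matching up exactly which half edges $\phi_t$ has ``committed to in $\Sigma'$'' with the in-edges $(y_i,x)$ demanded by the bad configuration, while keeping all the many parallel components in distinct $C^{-1}(t)$-classes and keeping enough vertices in reserve (the pigeonhole counting, analogous to the $N/|\mathcal{V}|$ and $M/2$ factors). One must also be careful that ``$\phi_t$ produces a locally valid picture at $x$'' is a finitary, computably-checkable event and that committing extra virtual half edges of $x$ to $\Sigma'$-values does not retroactively spoil anything — this is fine because those edges are still virtual and the construction only ever reads, never overwrites, $\phi_t$. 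The forward direction's only delicate point is confirming that the order of Definition~\ref{def:greedy} (precolor the edges into $x$, extend at $x$) is exactly the orientation produced by processing vertices in $\omega$-order in an acyclic graph, which it is, since each vertex has at most its already-processed neighbors constraining it and those constraints are precisely of the precolored-in-edge form.
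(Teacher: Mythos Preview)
Your forward direction is essentially the paper's Proposition~\ref{prop:greedytocomp}. One remark: acyclicity is not actually needed there---processing vertices in $\omega$-order and applying greediness at each step works on arbitrary computable $\Delta$-regular graphs, since at stage $n$ you only color the half edges incident to $n$ and these have never been touched before. This is how the ``moreover'' clause of Theorem~\ref{th:maincomp} is obtained.

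The reverse direction has a genuine gap. You propose to build a \emph{truly} $\Delta$-regular tree, with the uninteresting case ``completing vertices to degree $\Delta$'' as in Proposition~\ref{prop:hcconstruction}. This cannot work. A truly $\Delta$-regular computable tree is automatically highly computable, so any $\Pi \in \textnormal{HCOMP}$---equivalently by Theorem~\ref{thm:hcifffull}, any full $\Pi$---admits a computable $\Pi$-coloring on it. Since there exist full LCLs which are not greedy (e.g.\ the $5$-cycle homomorphism problem $\Pi_{H_2}$ for $\Delta=2$, as exhibited at the end of the paper), your construction could not defeat all computable $\phi_t$ for such $\Pi$. The plan is also internally inconsistent: if uninteresting rounds attach true neighbors to the currently virtual half edges of the $x_i$'s, then by the time $\phi_t$ converges there are no virtual half edges left at which to hook up the new bad-configuration vertex.

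The fix, and what the paper does, is to let the uninteresting case do \emph{nothing}. The resulting $T$ is $\Delta$-regular but not truly so: many edges stay virtual forever, and the set of true edges is not computable. (All of $X$ and $E$ still get used, because each new value of $t$ triggers a fresh stage~$0$ adding $N_0$ new vertices with $\Delta$ new virtual edges.) It is precisely the adversary's freedom to leave edges permanently virtual---and the algorithm's inability to detect this---that separates COMPUTABLE from HCOMP, and your construction must exploit it.
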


We start with the reverse direction. Actually, to prove the final claim of Theorem \ref{th:maincomp}, we deal with the stronger statement where our graph need not be acyclic:

\begin{prop}\label{prop:greedytocomp}
        Let $\Pi = (\Sigma,\V,\mathcal{E})$ be an LCL on $\Delta$-regular graphs, and assume $\Pi$ is greedy. Then any computable $\Delta$-regular graph admits a computable $\Pi$-coloring.
\end{prop}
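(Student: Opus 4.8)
The plan is to build the $\Pi$-coloring one vertex at a time, processing vertices in the order of the fixed $\omega$-enumeration of $X$, and to maintain throughout that every half edge colored so far incident to an as-yet-unfinished vertex carries a label from the greedy set $\Sigma'$. Concretely, fix a greedy $\Sigma' \subset \Sigma$ witnessing that $\Pi$ is greedy. At stage $n$ we handle the vertex $x_n$; if it has already been fully colored at an earlier stage we do nothing, and otherwise we complete the coloring of all half edges incident to $x_n$. When we come to $x_n$, some of its incident half edges $(y_i, x_n)$ — those whose other half edge $(y_i,e)$ was colored when an earlier neighbor $y_i$ was processed — already carry labels, and by the invariant those labels lie in $\Sigma'$. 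The definition of greedy (with $k$ equal to the number of such already-labeled incident half edges, and the precoloring being exactly those labels) then guarantees we can choose labels for the remaining incident half edges of $x_n$ so that $c(x_n) \in \mathcal V$, every true edge at $x_n$ that is now fully colored satisfies $c(e) \in \mathcal E$, and every not-yet-fully-colored half edge at $x_n$ gets a label in $\Sigma'$. For a true edge $e = (x_n, z)$ with $z$ not yet processed we record the label $c(x_n, e) \in \Sigma'$ and leave the other half edge uncolored; this is precisely what keeps the invariant alive.

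There are two points needing care. First, the edge constraint $\mathcal E$ is a constraint on a pair of half edges, so when we color $(x_n, e)$ we must make sure that whatever label we eventually assign to the opposite half edge at $z$ can be completed to an element of $\mathcal E$ — but greedy does not directly promise this for the \emph{edge} multiset, only that the new half-edge labels lie in $\Sigma'$. I would reconcile this by noting that greedy is exactly engineered so that the precolored half edges $(y_i,x)$ at a processed vertex are those that already "see" a $\Sigma'$-label from the other side; the edge constraint $c(e) \in \mathcal E$ is then verified at the later stage, when $z$ is processed with $(x_n,z)$ among its $k$ precolored half edges, using the greedy extension at $z$. In other words, each true edge is checked once, at whichever of its two endpoints is processed second, and greedy at that endpoint (applied to the precoloring that includes this edge's already-chosen $\Sigma'$-label) is what supplies a compatible completion with $c(e)\in\mathcal E$. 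I should double-check that Definition~\ref{def:greedy} as stated does guarantee $c(e)\in\mathcal E$ for the fully-colored true edges at $x$ — reading it, the extension is required to be a $\Pi$-coloring of $H$, and the true edges of $H$ are exactly the $\{x,y_i\}$, so yes, it does.

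Second, I must check that every vertex eventually gets fully processed and that the resulting coloring is computable. Each vertex $x$ is finished at the latest at its own stage: since $X$ is enumerated in order type $\omega$, $x = x_n$ for some $n$, and at stage $n$ it is completed. (It may be completed earlier, as a neighbor of some earlier-processed vertex, but that is fine.) Hence $c$ is total on $G$. For computability: to compute $c(x,e)$, run the construction through stage $n$ where $x = x_n$; by then $x$ is fully colored and we read off the label. The only thing to confirm computationally is that at each stage we can \emph{effectively} determine which incident half edges of $x_n$ are already labeled and compute a valid greedy extension — the former because the construction so far is finite and $G$ is computable (so we can list the incident edges of $x_n$ and check each against the finite record of colored half edges), and the latter because, for fixed $\Pi$, the set of valid greedy extensions for each value of $k$ and each precoloring is a fixed finite table that can be hard-coded.

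I expect the main obstacle to be the bookkeeping around the edge constraint and the order of processing: making sure the invariant "every dangling half edge at an unfinished vertex carries a $\Sigma'$-label" is genuinely preserved by the greedy extension step, and that this invariant is exactly strong enough to invoke greedy at the next vertex. Once that invariant is pinned down correctly, totality and computability are routine. Note also that acyclicity is never used, which is exactly why this argument proves the stronger "any computable $\Delta$-regular graph" statement needed for the final clause of Theorem~\ref{th:maincomp}; the analogous continuous statement would follow by running essentially the same greedy procedure along a continuous enumeration, but that is a separate proposition.
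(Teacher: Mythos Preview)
Your proposal is correct and follows essentially the same greedy construction as the paper: enumerate $X$ in type $\omega$, at stage $n$ color all half edges incident to $n$ using the greedy property with the precoloring given by the $\Sigma'$-labels already placed on half edges $(y_i,n)$ from earlier neighbors, and maintain the invariant that every half edge colored so far whose opposite half is still blank carries a $\Sigma'$-label. One small slip: a vertex is never ``completed earlier, as a neighbor of some earlier-processed vertex''---processing $y_i$ colors only $(y_i,e)$, not $(x_n,e)$---so each vertex is colored exactly at its own stage; this does not affect your argument.
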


\begin{proof}
    Let $G$ be such a graph, say with vertex and edge sets $X$ and $E$ respectively. WLOG $X \subset \omega$. Let $\Sigma' \subset \Sigma$ be greedy. We will recursively define our coloring $c$ in $\omega$ stages. At stage $n \in \omega$, we will color exactly the half edges incident to $n$ if $n \in X$, and do nothing otherwise. We will maintain that if $e \in E$ is such that exactly one of the half edges comprising $e$ has been colored so far, then it gets a color from $\Sigma'$.
    
    We now describe stage $n$ if $n \in X$: Let $y_i$ for $i < k$ be the neighbors of $n$ less than $n$. Then by inductive hypothesis, $c(y_i,n) \in \Sigma'$ for each $i$, and the $\Delta-k$ other incident edges to $x$, say $e_j$ for $j < \Delta - k$, not having a $y_i$ as their other endpoint have not been touched. Thus, by the definition of greedy, we may color the half edges incident to $x$ so that $c(x,e_j) \in \Sigma'$ for each $j$, thus maintaining our inductive hypothesis. 
\end{proof}

As is apparent from this proof, if an LCL on $\Delta$-regular graphs is greedy, it can be solved using a ``greedy'' algorithm. Such algorithms our typically easy to carry out in other combinatorial settings, making the analogue of Proposition \ref{prop:greedytocomp} easy to prove in these settings. For example (giving another of the parts of Theorem \ref{th:maincomp}):

\begin{prop}\label{prop:greedytocts}
    Let $\Pi = (\Sigma,\V,\mathcal{E})$ be an LCL on $\Delta$-regular graphs, and assume $\Pi$ is greedy. Then any continuous $\Delta$-regular graph admits a computable $\Pi$-coloring.
\end{prop}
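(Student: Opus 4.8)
The plan is to run the greedy construction of Proposition~\ref{prop:greedytocomp} essentially verbatim, but --- the vertex set now being an arbitrary, possibly uncountable, Polish space --- to replace its $\omega$-enumeration of the vertices by a continuous $\omega$-valued coloring that organizes the processing order; the resulting coloring will then be continuous because each half-edge receives its color on the basis of a finite portion of $G$ whose size is controlled by that coloring. Recall first that a continuous $\Delta$-regular graph is truly $\Delta$-regular, so there are no virtual edges. The first ingredient is the routine fact that if $H$ is a bounded-degree graph on a zero-dimensional Polish space $Y$ for which the set of neighbors of every clopen subset is again clopen, then $H$ admits a continuous proper coloring $Y \to \omega$ (cf.~\cite{bernshteyn_continuous}); I would prove it by fixing a countable clopen basis $V_0, V_1, \dots$ of $Y$ and letting $h(y)$ be the least $n$ with $y \in V_n$ and $V_n$ containing no $H$-neighbor of $y$. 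Such an $n$ exists because $y$ has finitely many neighbors and $Y$ is zero-dimensional; $h$ is a proper coloring because $H$-adjacent $y, y'$ with $h(y) = h(y') = n$ would both lie in $V_n$ while $y'$, being a neighbor of $y$, could not; and $h$ is continuous because the set of points all of whose neighbors avoid $V_m$ is the complement of the clopen set $N(V_m)$, so every level set of $h$ is a finite Boolean combination of clopen sets.

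I would then apply this fact to the graph $G^{(2)}$ on the vertex set $X$ of $G$ that joins two distinct vertices whenever their $\overline G$-distance is at most $2$; this is a bounded-degree graph whose property that neighbors of clopen sets are clopen follows by iterating that of $G$, so we obtain a continuous $h : X \to \omega$ that simultaneously (i) properly colors $\overline G$, whence each level set $h^{-1}(n)$ is $G$-independent, and (ii) assigns distinct values to the $\Delta$ neighbors of any vertex, whence the $\Delta$ half-edges at a vertex $v$ are canonically linearly ordered by the $h$-values of their far endpoints. Now run the construction of Proposition~\ref{prop:greedytocomp} processing vertices in increasing order of their $h$-value, which is a legitimate recursion over $n \in \omega$ since each level set is $G$-independent: upon reaching a vertex $v$ with $h(v) = n$, its neighbors of smaller $h$-value have already been treated and, by the inductive invariant, each contributes a color from $\Sigma'$ on the half-edge at its end of the edge to $v$; applying the definition of greedy to the star at $v$ (with $k$ the number of such neighbors, and using the canonical order of the half-edges at $v$ together with a once-and-for-all choice of a valid extension for each local configuration to make the rule deterministic) colors all half-edges at $v$ while placing colors from $\Sigma'$ on the half-edges of the edges running from $v$ to its larger-$h$ neighbors. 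This preserves the invariant, and the verification that the limit $c$ is a $\Pi$-coloring of $G$ is word for word that of Proposition~\ref{prop:greedytocomp}.

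It then remains to check that $c : \overline G \to \Sigma$ is continuous, and this is the step I expect to require the most care. The color that $c$ assigns at a vertex $x$ with $h(x) = n$ depends only on the ``decorated descending ball'' of $x$: the induced subgraph of $G$ on the set of vertices reachable from $x$ along strictly $h$-decreasing paths --- a finite set, of size bounded in terms of $n$, since $h$ is $\omega$-valued --- together with the $h$-values of all neighbors of all of its vertices. This is because the deterministic greedy rule at any vertex consumes only that vertex's $h$-value, its neighbors' $h$-values, and the $\Sigma'$-colors incoming from its smaller-$h$ neighbors, and those colors are in turn recursively determined by the decorated descending balls of those neighbors. Since $h$ is continuous and $G$ is a continuous graph, this finite decorated datum is constant on some clopen neighborhood of $x$ --- which one can see directly from zero-dimensionality together with the fact that neighbors of clopen sets are clopen (allowing $G$ to be ``explored'' to any bounded radius within a single clopen set), or may read off from the general principle (cf.~\cite{bernshteyn_continuous}) that a deterministic distributed algorithm of finite, though not uniformly bounded, radius of dependence with continuous inputs yields a continuous output on any continuous graph. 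Hence $c$ is locally constant, so continuous, which proves the proposition and with it the assertion of Theorem~\ref{th:maincomp} that an LCL in COMPUTABLE can be solved continuously on every $\Delta$-regular continuous graph.
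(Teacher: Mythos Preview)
Your proof is correct and follows essentially the same approach as the paper: obtain a continuous proper $\omega$-coloring of the vertex set (the paper simply cites \cite{bernshteyn_continuous} for a coloring of $G$ rather than of $G^{(2)}$), run the greedy algorithm of Proposition~\ref{prop:greedytocomp} over the color classes in order, and invoke the locality of the construction for continuity (the paper defers this last step entirely to Theorem~3.2 of \cite{BCG_Trees2}, whereas you spell out the ``decorated descending ball'' argument). Your passage to a $G^{(2)}$-coloring in order to canonically order the half-edges at each vertex and thereby make the greedy selection deterministic is a careful touch that the paper's terse proof leaves implicit in its citations.
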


\begin{proof}
    Let $G$ by such a graph, say with vertex set $X$. By \cite{bernshteyn_continuous}, there is a continuous proper coloring $d:X \rightarrow \omega$. We can then repeat the construction of the previous proposition, except that at stage $n \in \omega$ we color the half edges incident to all vertices in $d\inv(n)$. This works because each $d\inv(n)$ is independent, and the result is continuous since $d$ and $G$ are. (See, for example, Theorem 3.2 in \cite{BCG_Trees2}.)
\end{proof}

We now turn to the forward direction of \ref{thm:ciffgreedy}. Again we prove the contrapositive:

\begin{prop}
    If an LCL on $\Delta$-regular graphs $\Pi = (\Sigma,\V,\mathcal{E})$ is not greedy, there is a computable $\Delta$-regular tree with no computable $\Pi$-coloring.
\end{prop}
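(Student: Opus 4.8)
The plan is to mimic the structure of Proposition \ref{prop:hcconstruction}: build a computable $\Delta$-regular tree $T$ by a stagewise construction that diagonalizes against every partial recursive $\phi_t : X \times E \to \Sigma$, using the failure of greediness to guarantee that no $\phi_t$ can be a $\Pi$-coloring. The key difference from the highly computable case is that we are allowed to build a graph that is only computable, not highly computable, so we may leave virtual edges dangling indefinitely; this extra freedom is exactly what lets us handle LCLs that are not greedy but are full. First I would unpack ``not greedy'': for every $\Sigma' \subseteq \Sigma$ there is some $k \in \{0,\dots,\Delta\}$ and a precoloring of the half edges $(y_i, x)$ for $i < k$ by labels in $\Sigma'$ such that in every extension to a $\Pi$-coloring of the star $H$, some virtual half edge $(x,e)$ at $x$ receives a label outside $\Sigma'$. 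As in the previous proof, since $\Sigma$ is finite I would pass to a single ``bad'' choice of $k$, precoloring labels, and target witness structure for each $\Sigma'$, so that all data involved is computable (checking whether a given partial coloring of a star violates $\Pi$ is a finite check).

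The construction runs $\omega$ forests in parallel, one per index $t$, with a $T$-invariant computable function $C : X \to \omega$ recording which forest each vertex belongs to; at stage $n$ new vertices go into $C^{-1}(t_n)$ where $\{t_n\}$ lists each natural number infinitely often. Fix $t$. I maintain a set $\Sigma' \subseteq \Sigma$ (initially empty) of ``colors already forced to appear'' and a large pool of ``active'' vertices, each currently the center of a partially-built star whose already-colored incident half edges are precisely those forced by the construction to lie in $\Sigma'$, arranged so that the bad precoloring for the current $\Sigma'$ can be realized. At each stage I run $\phi_t$ for $m$ steps on the relevant half edges. In the uninteresting case ($\phi_t$ does not yet converge to a $\Pi$-coloring of the local configuration, or has converged but is already illegal) I do nothing destructive — I simply keep the stars' virtual edges virtual and wait. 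In the interesting case ($\phi_t$ has converged to a legal $\Pi$-coloring of the local star configurations), the badness of the current $\Sigma'$ forces, at a positive fraction of the active centers, some virtual half edge colored by a value $d \notin \Sigma'$; by pigeonhole I can pick a single such $d$, add it to $\Sigma'$, harvest from the surviving centers a large sub-pool of vertices carrying each color in the new $\Sigma'$ on a designated incident half edge, and reassemble these into fresh stars whose precolorings realize the (new) bad configuration. Since $|\Sigma'|$ strictly increases each interesting stage, the interesting case occurs at most $|\Sigma|$ times for each fixed $t$, so starting with a pool of size $N_0$ exceeding (fraction$)^{-|\Sigma|}$ suffices, and every vertex eventually has all $\Delta$ incident half edges fixed — but crucially, edges that stay virtual are allowed to stay virtual forever, so $T$ need only be computable, not highly computable.

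Then I verify the two claims exactly as before. $T$ is a computable $\Delta$-regular tree: each stage is recursive, every operation either creates new stars, extends stars along new virtual edges, or splices vertices into new stars, so acyclicity is preserved in the limit; each vertex, once placed in some $C^{-1}(t)$, is eventually no longer ``active'' (the interesting case for $t$ happens only finitely often) and at that point its $\Delta$ incident half edges are permanently determined; and membership of $(x,e)$ in $T$ is decided by running the construction until $x$ is enrolled. And $T$ has no computable $\Pi$-coloring: if $\phi_t$ were one, then at every stage it would eventually converge to a legal $\Pi$-coloring of the finitely many current star configurations (since it is legal on all of $T$), forcing the interesting case to recur unboundedly often for $t$, contradicting the bound $|\Sigma|$. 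The main obstacle I anticipate is purely bookkeeping: ensuring the ``active'' vertices I recycle in the interesting case genuinely lie in distinct $C^{-1}(t)$-components and genuinely carry the needed colors on the correct incident half edges, so that the bad precoloring of the new star can actually be installed without retroactively changing any half edge $\phi_t$ has already inspected — this is the analogue of condition (1)–(3) in the highly computable proof, and it is what the generous supply of virtual edges and the large pool size are there to guarantee.
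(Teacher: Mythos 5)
Your proposal is correct and takes essentially the same approach as the paper: diagonalize in parallel against all partial recursive $\phi_t$, maintaining a growing $\Sigma' \subseteq \Sigma$ together with a pool of ``active'' centers realizing a bad precoloring for $\Sigma'$, doing nothing in the uninteresting case (exploiting that $T$ need only be computable so virtual edges may stay virtual forever), and in the interesting case using non-greediness plus pigeonhole to enlarge $\Sigma'$ and splice surviving vertices into new stars, so that $|\Sigma'|$ strictly increases and hence the interesting case occurs at most $|\Sigma|$ times. One tiny imprecision: you say a vertex's $\Delta$ incident half edges are ``permanently determined'' only once the interesting case stops recurring for its $t$, but in fact (as in the paper) every vertex is assigned its $\Delta$ incident edges at the stage it is created -- what changes later is only whether those edges become true -- and this is what makes membership of $(x,e)$ in $T$ decidable immediately upon $x$ being enrolled; this does not affect the correctness of your argument.
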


\begin{proof}
    Our construction will closely follow that in the proof of Proposition \ref{prop:hcconstruction}. Fix $X,E$ and the $\phi_t$'s as before. Once again, we will be recursively constructing a sequence of $\Delta$-regular forests with finite vertex sets, but really building a sequence of $\omega$ forests in parallel as kept track of by a recursively defined function $C:X \rightarrow \omega$. We will use the term ``new'' for added vertices and edges in the same way. Call the forest we are building $T$.
    
    Since $\Pi$ is not greedy, for every $\Sigma' \subset \Sigma$, there is a $k \in \{0,\ldots,\Delta\}$ and a sequence $\alpha_0,\ldots,\alpha_{k-1}$ from $\Sigma'$ witnessing that $\Sigma'$ is not greedy. That is, such that in the situation of Definition \ref{def:greedy}, if this $k$ is used and each half edge $(y_i,x)$ is given the color $\alpha_i$, then if this is extended to a $\Pi$-coloring of $H$, there is some virtual half edge $(x,e)$ such that $c(x,e) \not\in \Sigma'$. Fix choices of such $k$ and $(\alpha_i)_{i < k}$ for each $\Sigma'$ and call them \textit{bad} for $\Sigma'$.
    
    Again, let us fix $t \in \omega$ and describe the $\omega$ steps in the construction of $T \res C\inv(t)$.
    
    The 0-th stage will be similar: Fix a very large $N_0 >> |\Sigma|$ to be determined later, independent of $t$. In the 0-th stage, place $N_0$ new vertices, say $w_0,\ldots,w_{N_0-1}$, in $C\inv(t)$, and give each $\Delta$ new virtual incident edges.
    
    We also initialize several variables: Set $\Sigma' = \emptyset$ and $N = N_0$. Also set $x_i = w_i$ for each $i < N$.
    
    At the start of an arbitrary successor stage, say $m+1$, suppose we have some $\Sigma' \subset \Sigma$,$N \in \omega$ very large, and vertices $x_i$ for $i < N$ and virtual half edges $(y_i^\alpha,e_i^\alpha)$ for $i < N$ and $\alpha \in \Sigma'$ such that:
    \begin{enumerate}
        \item The $x_i$'s and $y_i^\alpha$'s all lie in distinct $C\inv(t)$-components.
        \item For each $i$, if $\phi_t$ restricted to half edges meeting $B(x_i,1)$ gives a $\Pi$-coloring, there must be some virtual edge $e$ incident to $x$ with $\phi_t(x,e) \not\in \Sigma'$.
        \item For each $i$ and $\alpha$, $\phi_t(y_i^\alpha,e_i^\alpha) = \alpha$.
    \end{enumerate}
    
Note that this is all satisfied after stage 0.

We start our successor stage by running the Turing machine computing $\phi_t$ for $m$ steps on all the half edges meeting each $B(x_i,1)$. (Note that we can compute $B(x_i,1)$ since our vertex set so far is finite.)

If $\phi_t$ fails to converge in $\leq m$ steps on one of these inputs, or if it converges on all of them but the result fails to be a $\Pi$-coloring of some $B(x_i,1)$, we do nothing. We call this the ``uninteresting case''.

The ``interesting case'' is of course if $\phi_t$ does converge on all these inputs in $\leq m$ steps, and the result is a $\Pi$-coloring for each $B(x_i,1)$. In this case by condition (2) above, there is a virtual edge $f_i$ for each $i < N$ meeting $x_i$ with $\phi_t(x_i,f_i) \not\in \Sigma'$. Let $\beta \in \Sigma \setminus \Sigma'$ such that $\phi_t(x_i,f_i) = \beta$ for at least $N/|\Sigma|$ $i$'s. Update $\Sigma'$ to $\Sigma' \cup \{\beta\}$.

\begin{figure}
    \centering
    \includegraphics[width = 0.6\textwidth]{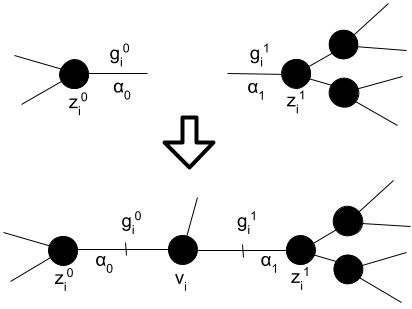}
    \caption{The ``interesting case'' for a step of the construction for a fixed $t$ with $\Delta = 3$. The top and bottom image represent the before and after states respectively.}
    \label{fig:gj}
\end{figure}

Let $k$ and $\alpha_0,\ldots,\alpha_{k-1} \in \Sigma'$ be bad for $\Sigma'$. Between the $(x_i,f_i)$'s and the $(y_i^\alpha,e_i^\alpha)$'s, and using condition (1) above, we can chose virtual half edges $(z_i^j,g_i^j)$ for $i < M := N / (\Delta |\Sigma|)$ such that the $z_i^j$'s all lie in distinct $C\inv(t)$-components and $\phi_t(z_i^j,g_i^j) = \alpha_j$ for each $j < k$. 

Now for each $i < M/2$, Let $v_i$ be a new vertex and add it as an endpoint to $g_i^j$ for each $j$, making all these edges true. Then, to maintain $\Delta$-regularity, add $\Delta-k$ new virtual incident edges to $v_i$. This is shown in Figure \ref{fig:gj}. 

We can now finish this stage of the construction with the updated value of $N$ being $M/2$. First set $x_i = v_i$ for each $i$. Condition(2) is then satisfied by definition of bad. We now define the $(y_i^\alpha,e_i^\alpha)$'s. If $\alpha = \alpha_j$ for some $j$, we can use the unused $(z_i^j,g_i^j)$'s. Else, if $\alpha = \beta$ we can use the first $M/2$ $(x_i,f_i)$'s, and if not, the first $M/2$ $(y_i^\alpha,e_i^\alpha)$'s from the previous stage of the construction.

Thus the description of the construction is complete. As in the proof of Proposition \ref{prop:hcconstruction}, it will be significant that the interesting case cannot occur more than $|\Sigma|$ times for a fixed $t$ (since $|\Sigma'|$ increases by one each time it does). As in that proof, the first application of this is that it tells us what to pick for $N_0$: We can take $N_0 > (2\Delta |\Sigma|)^{|\Sigma|}$, since $2\Delta|\Sigma|$ is by how much the value of $N$ was divided each time the interesting case occurred. 

\begin{claim}\label{claim:Tcomp2}
$T$ is a computable $\Delta$-regular forest.
\end{claim}

\begin{proof}
    At each stage in the construction, we either added new trees as components, did nothing, or joined separate components at a single new vertex. Thus $T$ is acyclic.
    
    $T$ has vertex and edge set $X$ and $E$ for the same reason as in the proof of Proposition \ref{prop:hcconstruction}. Also, since we maintain $\Delta$-regularity at each stage in the construction, to check if a pair $(x,e) \in X \times E$ is in $T$, we can run the construction until $x$ is added as a vertex, and then check if $e$ is one of its incident edges. Thus $T$ is computable.
\end{proof}

\begin{claim}
$T$ admits no computable $\Pi$-coloring.
\end{claim}

\begin{proof}
    Exactly as in the proof of Proposition \ref{prop:hcconstruction}, if some $\phi_t$ were a $\Pi$-coloring of $T$, we could conclude that the interesting case occurs infinitely often for this $t$, contradicting the statement before Claim \ref{claim:Tcomp2}.
\end{proof}

\end{proof}

We end this paper by showing that the inclusion in Theorem \ref{th:maincomp} is strict as promised. Actually by results from \cite{BCG_Trees2} and \cite{CKP_Speedup}, there is a sense in which being solvable by a greedy algorithm characterizes the class CONTNINUOUS as well: An LCL $\Pi$ is in CONTINUOUS if and only there is some $l \in \omega$ such that $\Pi$ can be solved on $\Delta$-regular forests by first finding a proper coloring of the distance $l$ graph (in which two vertices are adjacent if their distance in the original graph is at most $l$), then applying some constant-time local algorithm with that coloring as an input. It is easy to find proper colorings greedily, hence our statement. 

As in the proof of Proposition \ref{prop:greedytocts}, it is also easy to find proper colorings continuously. This is true even for the distance $l$ graph, hence the reverse direction in the result mentioned in the previous paragraph. However, in the computable setting, there is a big difference between $l = 1$ and $l > 1$; if a graph is computable but not highly computable, we cannot effectively determine which pairs of vertices have distance $l$ from eachother if $l > 1$. This difference will be key to our upcoming example. 

Our example will be a so called ``homomorphism problem''. This means that it encodes the problem of finding a homomorphism to a fixed finite graph. We use the same formalization of this as in \cite{G+_trees}:

\begin{definition}\label{def:hom}
    Let $H$ be a finite graph (in the usual sense) with vertex set $V$. $\Pi_H$ is the LCL on $\Delta$-regular graphs $(V,\mathcal{V},\mathcal{E})$, where $\mathcal{V}$ is the set of multisets whose $\Delta$-elements are all the same, and $\mathcal{E}$ is the set of pairs $\{v,w\}$ with $v$ and $w$ adjacent in $H$.
\end{definition}

For example, $\Pi_{K_k}$ is the problem of proper $k$-coloring. We start with a characterization of which homomorphism problems are greedy which may be interesting in its own right. For example, by Theorem \ref{thm:ciffgreedy} and the previous remark about $\Pi_{K_k}$, it generalizes the observation of Schmerl \cite{schmerl_brooks} that every computable maximum degree $\Delta$-graph is computable ($\Delta+1$)-colorable, but that there are such graphs which are not computably $\Delta$-colorable.

\begin{lemma}
    Let $H$ be a finite graph in the usual sense. $\Pi_H$ is greedy if and only if $H$ contains a $(\Delta+1)$-clique.
\end{lemma}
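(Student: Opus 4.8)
The plan is to first unwind what greediness means for a homomorphism problem $\Pi_H$, turning it into a purely combinatorial statement about common neighbourhoods in $H$, and then handle the two implications separately. The key observation is that, since the vertex multiset $\mathcal{V}$ of $\Pi_H$ consists exactly of the constant multisets, in any $\Pi_H$-coloring every half edge at a vertex $z$ gets one common label — call it the label of $z$ — and a true edge $\{z,z'\}$ is legal iff the labels of $z$ and $z'$ are adjacent in $H$. Applying this to the star of Definition \ref{def:greedy}: precoloring $(y_i,x)$ with $\alpha_i\in\Sigma'$ forces $y_i$ to carry label $\alpha_i$, and an extension to a $\Pi_H$-coloring with $c(x,e)\in\Sigma'$ on the virtual edges at $x$ is exactly a choice of a single label $v$ for $x$ with $v$ adjacent in $H$ to each $\alpha_i$ ($i<k$), subject to $v\in\Sigma'$ when $k<\Delta$ (when $k=\Delta$ there are no virtual edges at $x$, so any $v\in V$ is allowed). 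Thus, after noting that the $k=0$ instance forces $\Sigma'\neq\emptyset$, greediness of $\Sigma'\subseteq V$ for $\Pi_H$ is equivalent to: $\Sigma'\neq\emptyset$; for every $k<\Delta$ and all $\alpha_0,\dots,\alpha_{k-1}\in\Sigma'$ these vertices have a common neighbour inside $\Sigma'$; and for all $\alpha_0,\dots,\alpha_{\Delta-1}\in\Sigma'$ they have a common neighbour in $V$.

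For the direction ($\Leftarrow$), suppose $W$ is the vertex set of a $(\Delta+1)$-clique of $H$ and take $\Sigma'=W$. Given any $\le\Delta$ labels $\alpha_0,\dots,\alpha_{k-1}\in W$, they occupy at most $\Delta$ of the $\Delta+1$ vertices of $W$, so some $v\in W$ is distinct from all of them; since $W$ is a clique, $v$ is adjacent to each $\alpha_i$, and $v\in W=\Sigma'$. This verifies the translated condition (in both the $k<\Delta$ and $k=\Delta$ cases), so $\Pi_H$ is greedy.

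For the direction ($\Rightarrow$), suppose $\Sigma'$ is greedy. I would build vertices $\alpha_0,\alpha_1,\dots$ of $\Sigma'$ recursively: let $\alpha_0$ be any element of the nonempty $\Sigma'$, and given $\alpha_0,\dots,\alpha_{j-1}$ with $j\le\Delta-1$, apply the translated condition with $k=j$ to obtain $\alpha_j\in\Sigma'$ adjacent in $H$ to all of $\alpha_0,\dots,\alpha_{j-1}$. Because $H$ is loopless, $\alpha_j$ is distinct from every earlier $\alpha_i$, so by induction $\{\alpha_0,\dots,\alpha_j\}$ is a $(j+1)$-clique; carrying this out up to $j=\Delta-1$ yields a $\Delta$-clique $\{\alpha_0,\dots,\alpha_{\Delta-1}\}$ contained in $\Sigma'$. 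Finally, apply the $k=\Delta$ case of the condition to these $\Delta$ labels to get $v\in V$ adjacent to all of them; once more $v$ is distinct from each $\alpha_i$, so $\{\alpha_0,\dots,\alpha_{\Delta-1},v\}$ is a $(\Delta+1)$-clique of $H$.

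There is no serious obstruction here; the one point that needs care is the translation in the first paragraph, and in particular noticing that the $k=\Delta$ instance of Definition \ref{def:greedy} imposes no ``stay inside $\Sigma'$'' requirement on the label of the centre vertex (there being no virtual edges at it). That asymmetry is exactly what allows the jump from a $\Delta$-clique inside $\Sigma'$ to a $(\Delta+1)$-clique in $H$ in the forward direction, and also explains why a greedy $\Sigma'$ need not itself contain a $(\Delta+1)$-clique.
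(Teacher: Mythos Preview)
Your proof is correct and follows the same route as the paper's: take the clique itself as $\Sigma'$ for the backward direction, and for the forward direction build a clique one vertex at a time by repeatedly invoking the greedy extension at the centre of the star, using the $k<\Delta$ cases to stay inside $\Sigma'$ and the $k=\Delta$ case for the last vertex. Your explicit translation of greediness for $\Pi_H$ into a common-neighbour condition, and your remarks on looplessness and on the $k=\Delta$ asymmetry, make some steps more explicit than the paper does, but the argument is essentially identical.
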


\begin{proof}
    If $\Sigma'$ is a size $\Delta+1$ set of vertices on which $H$ induces a clique, then this set clearly witnesses that $\Pi_H$ is greedy.
    
    On the other hand, suppose $\Sigma'$ is a set of vertices witnessing that $\Pi_H$ is greedy. We will produce inductively a sequence $v_0,\ldots,v_{\Delta}$ on which $H$ induces a clique, maintaining inductively that $v_i \in \Sigma'$ for $i < \Delta$. 
    
    We use the notation of Definiton \ref{def:greedy}. Let $k \leq \Delta$ and suppose we have $v_i \in \Sigma'$ for $i < k$, all pairwise adjacent. Apply the definition with this value of $k$ and $c(y_i,x) = v_i$ for all $i$. We get an extension to the half edges incident to $x$ in which, by definition of $\Pi_H$, all these half edges must get the same label, call it $v_k$, and $v_k$ must be adjacent to each previous $v_i$. Furthermore, if $k < \Delta$, $x$ has some incident virtual edge, and so $v_k$ must be in $\Sigma'$, maintaining our inductive hypothesis.
\end{proof}

It now suffices to produce a finite graph $H$ with no $(\Delta+1)$-clique but with $\Pi_H \in \textnormal{CONTINUOUS}$. We will do this by induction on $\Delta$, and thus call the graph $H_\Delta$.

The base case $H_2$ will be a 5 cycle, say with vertices $v_0,\ldots,v_4$, with $v_i$ and $v_j$ adjacent if and only if $i$ and $j$ differ by one mod 5. 

Given $H_\Delta$, we define $H_{\Delta+1}$ by introducing a new vertex, call it $w_{\Delta+1}$, which is adjacent to each vertex of $H_\Delta$.

\begin{claim}
For each $\Delta$, $H_\Delta$ does not contain a $(\Delta+1)$-clique.
\end{claim}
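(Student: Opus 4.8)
The plan is a routine induction on $\Delta$ tracking the recursive definition of $H_\Delta$.

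For the base case $\Delta = 2$, the graph $H_2$ is the $5$-cycle $C_5$: two of its vertices $v_i, v_j$ are adjacent only when $i \equiv j \pm 1 \pmod 5$, and in that situation one checks immediately that they have no common neighbor, so $C_5$ is triangle-free. Hence $H_2$ contains no $3$-clique, which is exactly the statement for $\Delta = 2$.

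For the inductive step, assume $H_\Delta$ contains no $(\Delta+1)$-clique, and suppose toward a contradiction that $H_{\Delta+1}$ contains a clique $K$ with $|K| = \Delta+2$. Recall that $H_{\Delta+1}$ is $H_\Delta$ together with a single new vertex $w_{\Delta+1}$ joined to every vertex of $H_\Delta$, so every vertex of $K$ other than possibly $w_{\Delta+1}$ lies in $H_\Delta$. If $w_{\Delta+1} \notin K$, then $K \subseteq V(H_\Delta)$ is already a clique of size $\Delta+2 > \Delta+1$ inside $H_\Delta$, contradicting the inductive hypothesis. If $w_{\Delta+1} \in K$, then since any subset of a clique is again a clique, $K \setminus \{w_{\Delta+1}\}$ is a clique of size $\Delta+1$ whose vertices all lie in $H_\Delta$, again contradicting the inductive hypothesis. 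In either case we reach a contradiction, so $H_{\Delta+1}$ has no $(\Delta+2)$-clique, completing the induction.

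I do not expect a genuine obstacle here: the only points requiring any care are the triangle-freeness of $C_5$ in the base case and the trivial case split on whether the newly added vertex belongs to a hypothetical oversized clique, together with the observation that deleting a vertex from a clique leaves a clique. This claim is the easy combinatorial half of the separation argument; the real work of the section is the companion fact that $\Pi_{H_\Delta}$ nevertheless lies in $\textnormal{CONTINUOUS}$.
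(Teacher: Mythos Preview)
Your proof is correct and follows essentially the same approach as the paper: induction on $\Delta$, with the base case noting that the $5$-cycle is triangle-free, and the inductive step splitting on whether $w_{\Delta+1}$ belongs to a hypothetical $(\Delta+2)$-clique to obtain a forbidden clique in $H_\Delta$ either way.
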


\begin{proof}
We proceed by induction on $\Delta$. $H_2$, a 5 cycle, does not contain a triangle. Suppose we know that $H_\Delta$ contains no $(\Delta+1)$-clique, and suppose to the contrary that $H_{\Delta+1}$ contains a $(\Delta+2)$-clique $K$. Then since $H_{\Delta+1}$ has only one new vertex compared to $H_\Delta$, the restriction of $K$ to $H_\Delta$ must give either a $(\Delta+1)$ or $(\Delta+2)$ clique depending on whether $w_{\Delta+1} \in K$. In either case this contradicts the inductive hypothesis.
\end{proof}

\begin{claim}
For each $\Delta$, $H_\Delta \in \textnormal{CONTINUOUS}$ (for $\Delta$-regular forests).
\end{claim}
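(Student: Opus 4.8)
The plan is to induct on $\Delta$, working throughout with the combinatorial reformulation of membership in CONTINUOUS recalled just above (via \cite{BCG_Trees2} and \cite{CKP_Speedup}): an LCL $\Pi$ lies in CONTINUOUS for $\Delta$-regular forests precisely when there is some $l$ and a constant-radius local rule that turns any proper coloring of the distance-$l$ graph of a $\Delta$-regular forest into a $\Pi$-coloring. Since proper colorings of distance-$l$ graphs are available continuously (as noted above, and as in the proof of Proposition \ref{prop:greedytocts}), it suffices to produce, for each $\Delta$, such a pair consisting of an $l$ and a local rule $A$ for $\Pi_{H_\Delta}$. I will use freely that a proper coloring of the distance-$l$ graph is in particular a proper coloring of the distance-$1$ graph, that it uses some constant number of colors $q = q(\Delta,l)$, and that from such a coloring the standard greedy-by-color procedure computes, by a constant-radius local rule, a maximal independent set either of the graph itself or of its distance-$l$ graph.

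For the inductive step, suppose $(l,A)$ witnesses $\Pi_{H_\Delta} \in \textnormal{CONTINUOUS}$, and let $G$ be a $(\Delta+1)$-regular forest equipped with a proper coloring $\chi$ of its distance-$l$ graph. First I would extract a maximal independent set $S$ of $G$ (greedy-by-color on the distance-$1$ graph, constant radius) and assign the universal vertex $w_{\Delta+1}$ of $H_{\Delta+1}$ to every half edge at every vertex of $S$. Every $v \notin S$ has a neighbor in $S$, hence at most $\Delta$ neighbors outside $S$, so the induced subgraph on $V(G)\setminus S$ completes, by a fixed local rule that appends virtual half edges, to a $\Delta$-regular forest $G'$. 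Since deleting vertices does not decrease distances, $\chi$ restricted to $V(G)\setminus S$ is still a proper coloring of the distance-$l$ graph of $G'$; running $A$ on $G'$ produces a $\Pi_{H_\Delta}$-coloring, which for a homomorphism problem is just a homomorphism $h\colon V(G)\setminus S \to V(H_\Delta)$. Extending by $h(v) = w_{\Delta+1}$ on $S$ gives a homomorphism $G \to H_{\Delta+1}$: edges inside $V(G)\setminus S$ are handled by $A$, and any edge meeting $S$ is fine because $w_{\Delta+1}$ is adjacent to everything in $H_{\Delta+1}$ and $S$ is independent. Relabeling each vertex's half edges by its $h$-value yields the desired $\Pi_{H_{\Delta+1}}$-coloring. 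The thing to verify is that this composite is a constant-radius local rule in $\chi$: computing $S$ near $v$, reconstructing $G'$ near $v$, and then running $A$ near $v$ in $G'$ each consult only a ball of constant radius in $G$.

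For the base case $\Delta = 2$, where $H_2 = C_5$, a $2$-regular forest is a disjoint union of (finite, one-way-infinite, or bi-infinite) paths and $\Pi_{C_5}$ asks for a homomorphism to $C_5$. I would take $l = 4$; given a proper coloring $\chi$ of the distance-$4$ graph, run greedy-by-color in the distance-$4$ graph to get a maximal independent set $S$ of that graph, so that along each path consecutive elements of $S$ are at distance between $5$ and $9$, every component meets $S$, and each segment from a path endpoint to the nearest element of $S$ has length at most $4$. Label every vertex of $S$ with the vertex $0 \in C_5$. Then each internal segment needs a closed walk based at $0$ of length exactly $d \in \{5,\dots,9\}$, and each end segment a walk of length exactly $\ell \in \{0,\dots,4\}$ starting at $0$. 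Because $C_5$ is non-bipartite of girth $5$, closed walks based at $0$ exist of every length other than $1$ and $3$, and walks of each length $0,1,\dots,4$ starting at $0$ obviously exist, so both can be filled in by a deterministic rule depending only on the segment length (for instance: if $d$ is odd, go around $C_5$ once, then alternate). Adjacent vertices of a segment identify the same pair of endpoints (or the same path endpoint) within constant radius and run the same rule, so the labels agree, and the result is a homomorphism $G \to C_5$ given by a constant-radius local rule in $\chi$.

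The main obstacles I anticipate are bookkeeping rather than conceptual. In the base case, one must check that the walk-filling rule correctly handles end segments and the short components that may contain only a single element of $S$ (so that this vertex is flanked by two end segments, both of which must see the label $0$ there). In the inductive step, one must confirm the degree bound that permits the completion $G'$ and that the nested local computations all fit inside one constant-radius ball of $G$. I expect the base-case endpoint analysis to be the part requiring the most care.
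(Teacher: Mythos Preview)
Your proposal is correct and follows the same inductive skeleton as the paper: assign $w_{\Delta+1}$ on a maximal independent set, observe the complement has maximum degree $\leq \Delta$, and appeal to the inductive hypothesis; for the base case, cut each path into bounded segments using a maximal sparse set and fill each segment with a walk in $C_5$. The paper carries this out directly in the continuous category (finding a clopen maximal $4$-discrete set and a continuous injective-on-components coloring via \cite{bernshteyn_continuous}), whereas you route everything through the combinatorial characterization of CONTINUOUS from \cite{BCG_Trees2} and \cite{CKP_Speedup}; the content is the same.

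One technical wrinkle to tighten: your inductive hypothesis is literally ``$(l,A)$ witnesses $\Pi_{H_\Delta}\in\textnormal{CONTINUOUS}$'', and CONTINUOUS is defined only for \emph{truly} $\Delta$-regular graphs, yet you then apply $A$ to $G'$, which has virtual half edges. The paper avoids this by phrasing the inductive hypothesis as ``every continuous forest of maximum degree $\leq \Delta$ admits a continuous homomorphism to $H_\Delta$''. Your base case already proves this stronger statement (you explicitly treat finite and one-way-infinite paths, i.e.\ the end segments), so the fix is simply to carry that formulation through the induction rather than invoking membership in CONTINUOUS per se.
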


\begin{proof}
For this proof we abandon talk of half edges and switch to the usual notion of a graph. We prove by induction on $\Delta$ that every continuous forest with maximum degree $\leq \Delta$ admits a continuous homomorphism to $H_\Delta$.

For the base case, let $G$ be a continuous forest on a Polish space $X$ with maximum degree 2. By \cite{bernshteyn_continuous}, we can find a clopen maximal 4-discrete set $A$. Let $G'$ be the continuous graph $G \res (X \setminus A)$. Also by \cite{bernshteyn_continuous}, let $d:(X \setminus A) \rightarrow \omega$ by a continuous coloring such that each $d\inv(n)$ is 7-discrete. Note that each $G'$-component is a path of length at most 7, and so $d$ labels the vertices of each component with unique labels.

We now define our continuous homomorphism, call it $c$. Set $c(x) = v_0$ for $x \in A$. Now consider a $G'$-component, call it $P$. $P$ is a path of length at most 7. The endpoints are possibly $G$-adjacent to points in $A$, and if both are so, the length of $P$ is at least 3. Observe then that we can always extend $c$ to $P$ while keeping it a homomorphism to $H_2$: This is trivial if we are not in the case where both endpoints of $P$ are adjacent to a point in $A$, and if we are in that case, we can alternate between $v_0$ and $v_1$ along $P$ if the length of $P$ is even, or circle the 5-cycle $H_2$ once then do this alternation if the length is odd. (Note that $P$ is long enough to allow this.) Furthermore, since $d \res P$ is injective, we can do this in a constructive way using $d$ to break symmetry. That is, we can extend $c$ to a homomorphism $G \res H_2$ in such a way that the value of $c$ at a point $x$ depends only on the values of $d$ and the indicator funciton $\chi_A$ of $A$ on $B(x,N)$ for some large constant $N$. It follows from this and the continuity of $d$ and $\chi_A$ that $c$ is continuous. (See, for example, Theorem 3.2 in \cite{BCG_Trees2}.)

The inductive step is now easy. Suppose we have this result for some $\Delta \geq 2$, and let $G$ be a continuous forest on a Polish space $X$ with maximum degree $\leq \Delta+1$. We will define a continuous homomorphism $c$ to $H_{\Delta+1}$. By \cite{bernshteyn_continuous} again, we may define $c$ be $w_{\Delta+1}$ on a clopen maximal independent set. Let $G' = G \res (X \setminus c\inv(w_{\Delta+1}))$. This is a continuous graph with maximum degree $\leq \Delta$ by maximality of $c\inv(w_{\Delta+1})$. Thus by inductive hypothesis there is a continuous homomorphism $c':G' \res H_\Delta$. Since $w_{\Delta+1}$ is adjacent to every vertex in $H_\Delta$, we can just extend $c$ to all of $X$ by setting $c \res (X \setminus A) = c'$.
\end{proof}

\section*{Acknowledgements} 

Thanks to the American Institute of Mathematics for hosting the workshop at which much of this research was conducted, as well as to the organizers of that workshop, Clinton Conley, Stephen Jackson, Andrew Marks, and Slawomir Solecki. Thanks also to Jan Greb\'{i}k for suggesting that Theorem \ref{th:main} ought to be true, and encouraging me to write up the proof of it.

\bibliographystyle{amsalpha} 

\bibliography{main}

\end{document}